\numberwithin{equation}{section}
\newtheorem{lem}{Lemma}[section]
\newtheorem{axiom}{Axiom}[section]
\newtheorem{thm}{Theorem}[section]
\newtheorem{defn}{Definition}[section]
\newtheorem{cor}{Corollary}[section]
\newtheorem{conj}{Conjecture}[section]
\newtheorem{rem}{Remark}[section]
\DeclareMathOperator{\C}{\mathbb{C}}
\DeclareMathOperator{\Z}{\mathbb{Z}}
\DeclareMathOperator{\N}{\mathbb{N}}
\DeclareMathOperator{\lcm}{lcm}
\newcommand{\Mod}[1]{\ (\text{mod}\ #1)}
\newcommand{\tends}{\rightarrow}
\title{Siegel zeros and Sarnak's Conjecture}
\author{Jake Chinis\vspace{-1.5\baselineskip}}
\begin{document}

\maketitle

\begin{abstract}
    Assuming the existence of Siegel zeros, we prove that there exists an increasing sequence of positive integers for which Chowla's Conjecture on $k$-point correlations of the Liouville function holds. This extends work of Germ{\'a}n and K{\'a}tai, where they studied the case $k=2$ under identical hypotheses. 
    
    An immediate corollary, which follows from a well-known argument due to Sarnak, is that Sarnak's Conjecture on M{\"o}bius disjointness holds. More precisely, assuming the existence of Siegel zeros, there exists a subsequence of the natural numbers for which the Liouville function is asymptotically orthogonal to any sequence of topological entropy zero. 
\end{abstract}

\section{Introduction}

The Liouville function, $\lambda$, is the completely multiplicative function defined by $\lambda(p):=-1$ for all primes $p$. As such, $\lambda(n)=1$ if $n$ has an even number of prime factors, counted with multiplicity, and $\lambda(n)=-1$ otherwise. An important problem in analytic number theory is to understand the asymptotic behaviour of the partial sums $\sum_{n\leq x}\lambda(n)$. Viewing $\{\lambda(n)\}_n$ as a sequence of independent random variables, each taking the value $\pm 1$ with equal probability, we might expect that the partial sums of $\lambda$ exhibit some cancellation. Indeed, the Prime Number Theorem (PNT) is equivalent to the fact  
\begin{align*}
  \sum_{n\leq x} \lambda(n)=o(x),
\end{align*}
as $x\tends \infty$. The Riemann Hypothesis (RH) is equivalent to the fact that the partial sums of $\lambda$ exhibit "square-root cancellation:" for any $\epsilon >0$,
\begin{align*}
       \sum_{n\leq x}\lambda(n) \ll x^{1/2+\epsilon},
\end{align*}
as $x\tends \infty$. A famous conjecture due to Chowla \cite{ChowlaConj} states similar estimates hold for \textit{multiple correlations} of $\lambda$:

\begin{conj}[Chowla's Conjecture]
For any $k$ distinct integers $h_1,\dots,h_k$,
\begin{align*}
    \sum_{n\leq x} \lambda(n+h_1)\cdots \lambda(n+h_k)=o_k(x),
\end{align*}
as $x\tends \infty$, where we set $\lambda(n)=0$ for integers $n\leq 0$ and where the implied constant may depend on $h_1,\dots,h_k$.
\end{conj}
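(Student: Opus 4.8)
\emph{Scope.} The displayed statement is Chowla's Conjecture, which is open for every $k\ge 2$; what follows is therefore the program by which one would attack it, together with an identification of the current obstruction, rather than a complete proof. (The paper itself establishes only the conditional, sparse-subsequence statement announced in the abstract.)

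\noindent\textbf{Proof proposal.} Write $C(x):=\sum_{n\le x}\lambda(n+h_1)\cdots\lambda(n+h_k)$. The case $k=1$ is the Prime Number Theorem, so assume $k\ge 2$. \emph{Step 1 (localization).} Since $C(x)$ is a one-parameter sum, a generalized von Neumann inequality cannot be applied to it directly; instead one uses a correspondence principle, equivalently Tao's entropy decrement argument, to reduce to bounding an average over $x\le N$ of the local correlations of $\lambda$ over short windows $[x,x+H]$, with $H=H(N)\to\infty$ slowly. After the standard treatment of the small primes, it then suffices to show that $\lambda$, restricted to a positive proportion of such windows, is Gowers-uniform of the relevant order $s=s(k)\ge 1$. \emph{Step 2 (inverse theorem).} By the inverse theorem for the Gowers $U^{s+1}$-norms of Green--Tao--Ziegler, any failure of this forces $\lambda$ to correlate nontrivially, on a positive proportion of windows, with a bounded-complexity $s$-step nilsequence $n\mapsto F(g(n)\Gamma)$. \emph{Step 3 (multiplicativity precludes the obstruction).} One then shows that $\lambda$ cannot exhibit such correlation. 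The template is the Green--Tao--Ziegler theorem that $\mu$ is asymptotically orthogonal to nilsequences, which combines equidistribution on nilmanifolds with the multiplicative structure of $\mu$; the extra ingredient that makes it usable here is the Matom\"aki--Radziwi\l\l{} theorem on multiplicative functions in short intervals, which supplies the needed cancellation in the $s=1$ (Fourier) case --- equivalently, for $k=2$ --- and, through the Tao--Ter\"av\"ainen structure theorem for logarithmically averaged correlations, for all odd $k$.

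\noindent\textbf{The main obstacle.} The difficulty lies entirely in carrying Step 3 through for the Ces\`aro average and for even $k$. The tools above --- entropy decrement, which upgrades a local or averaged bound to a global one, together with Matom\"aki--Radziwi\l\l{} and the Tao--Ter\"av\"ainen structure theorem --- presently deliver cancellation only for the \emph{logarithmically averaged} correlations $\sum_{n\le x}n^{-1}\lambda(n+h_1)\cdots\lambda(n+h_k)$, and even then only for $k=2$ and for odd $k$; when $k$ is even the product $\lambda(n+h_1)\cdots\lambda(n+h_k)$ is parity-even, so sign information at the primes is invisible and one has no leverage beyond the bilinear structure already built into Matom\"aki--Radziwi\l\l{}. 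Passing from the logarithmic average to the Ces\`aro average in the statement, and handling even $k$, would require either a power-saving form of the short-interval estimate, far beyond the current state of the art, or a genuinely new mechanism for feeding multiplicativity into the $U^{s+1}$-inverse machinery; no present method closes this gap. This is precisely why one retreats, as in this paper, to proving the conjecture only along a sparse increasing sequence of $x$ and only under the hypothesis that Siegel zeros exist --- that hypothesis supplying an exceptional real character whose associated "model" for $\lambda$ is structured enough that the required correlations can be evaluated more or less directly.
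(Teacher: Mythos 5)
The displayed statement is an open conjecture; the paper offers no proof of it, and you correctly refrain from claiming one, so there is nothing to check line by line. Your description of the state of the art is broadly accurate: the entropy-decrement/inverse-theorem program, Matom\"aki--Radziwi{\l}{\l} in short intervals, and the Tao--Ter\"av\"ainen results for logarithmically averaged odd correlations are indeed the current frontier, and the parity obstruction for even $k$ and the loss in passing from logarithmic to Ces\`aro averages are the genuine blockages. One small imprecision: the $k=2$ logarithmic case and the odd-$k$ cases are not obtained by simply ``running Step 3''; the odd-$k$ result of Tao--Ter\"av\"ainen rests on a structural dichotomy for logarithmically averaged correlation sequences (and, in one of their proofs, on ergodic-theoretic machinery) rather than on a direct orthogonality-to-nilsequences statement for $\lambda$ on short intervals.

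It is worth emphasizing that the paper's actual conditional argument (Theorem \ref{main_thm}) proceeds along entirely different lines from the Gowers-uniformity program you sketch. Assuming a Siegel zero for $L(s,\chi_\ell)$, Lemma \ref{GK-Primes} shows that $\chi_\ell(p)=-1$ for almost all primes in a suitable range, so $\lambda$ is replaced by the completely multiplicative proxy $\lambda_r$ agreeing with $\chi_\ell$ on primes $p>r$ (Lemma \ref{main_lemma}). The $k$-point correlation of $\lambda_r$ is then decomposed over the $r$-smooth parts $a_i$ and $r$-rough parts $b_i$ of the shifted arguments; the $b_i$ are parametrized via the Smith normal form of the associated linear system (Appendix \ref{Parametrization}), the count of admissible $m$ in each residue class modulo $q_\ell$ is handled by the Fundamental Lemma of Sieve Theory (Lemma \ref{FLST}), and the resulting complete character sum $\sum_{n \bmod q}\chi(f(n))$ with $f$ a product of $k+1$ distinct linear forms exhibits square-root cancellation by the Weil bound (Lemma \ref{CharacterSum}). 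No nilsequences, inverse theorems, or short-interval multiplicative estimates enter; the entire gain comes from the exceptional character acting as an explicit, algebraically tractable model for $\lambda$, which is why the conclusion holds only for $x$ in the window $[q_\ell^{10},q_\ell^{(\log\log\eta_\ell)/3}]$ tied to the modulus of the hypothetical exceptional character.
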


As stated earlier, the case $k=1$ is equivalent to the PNT. Chowla's Conjecture remains open for $k>1$ and the case $k=2$ is believed to be as difficult as the Twin Prime Conjecture; see \cite{Chowla+TwinPrimes}.

\begin{rem}
There are analogues of Chowla's Conjecture over finite function fields. We refer the reader to the work of Sawin and Shusterman \cite{Chowla_Finite}, where they study the relationship between Chowla's Conjecture and the Twin Prime Conjecture over finite function fields. It is interesting to note that Sawin and Shusterman use the same general idea that we use in this paper (and that was also used in \cite{Liou_Siegel}); namely, they approximate the M{\"o}bius function by a Dirichlet character.
\end{rem}

Although Chowla's Conjecture still seems to be out of reach, there has been much progress towards partial results of this conjecture, as well as proofs of variations of Chowla's original claim. For instance, Harman, Pintz, and Wolke \cite{HPW_Liou} proved that 
\begin{align*}
    -\frac{1}{3}+\mathcal{O}\Big(\frac{\log x}{x}\Big)
    \leq
    \frac{1}{x}\sum_{n\leq x}\lambda(n)\lambda(n+1)
    \leq
    1-\mathcal{O}_\epsilon\Big(\frac{1}{\log^{7+\epsilon}x}\Big),
\end{align*}
for all $\epsilon>0$. This was subsequently improved by Matom{\"a}ki and Radziwi{\l\l} \cite{MatRad_Mult} to
\begin{align*}
    \frac{1}{x}\Biggr|\sum_{n\leq x}\lambda(n)\lambda(n+1)\Biggr|\leq 1-\delta,
\end{align*}
for some explicit constant $\delta>0$ and all $x$ sufficiently large.

Concerning weaker versions of Chowla's Conjecture, Matom{\"a}ki, Radziwi{\l\l}, and Tao \cite{MatRadTao_AvgChowla} averaged over the parameters $h_1,\dots, h_k$ and showed that, for any $k\in\N$ and any $10\leq h \leq x,$
\begin{align*}
    \sum_{1\leq h_1,\dots,h_k\leq h}\Biggr|\sum_{n\leq x}\lambda(n+h_1)\cdots \lambda(n+h_k)\Bigg|
    \ll
    k\Biggr(\frac{\log \log h}{\log h}+\frac{1}{\log^{1/3000}x}\Biggr)h^kx,
\end{align*}
thus establishing an \textit{averaged} form of Chowla's Conjecture.

Tao \cite{Log_Chowla_1} made progress towards a \textit{logarithmically averaged} version of Chowla's Conjecture by showing that
\begin{align*}
    \sum_{n\leq x}\frac{\lambda(n)\lambda(n+1)}{n}=o(\log x)
\end{align*}
as $x\tends \infty.$ Following up on this, Tao and Ter{\"a}v{\"a}inen \cite{Log_Chowla_3,Log_Chowla_2} were able to establish a logarithmically averaged version of Chowla's Conjecture for odd $k$-point correlations; that is, for any odd $k\in\N$ and any integers $h_1,\dots,h_k$,
\begin{align*}
    \sum_{n\leq x}\frac{\lambda(n+h_1)\cdots \lambda(n+h_k)}{n}=o(\log x)
\end{align*}
as $x\tends \infty$. Recently, Helfgott and Radziwi{\l}{\l} \cite{Helfgott-Maks_LogChowla} improved the bounds obtained by Tao in \cite{Log_Chowla_1} and Tao and Ter{\"a}v{\"a}inen in \cite{Log_Chowla_2} for $k=2$.

\begin{rem}
Note that the preceding results follow from Chowla's Conjecture either immediately or by partial summation. Furthermore, the same results can be stated for the M{\"o}bius function, $\mu$, in place of $\lambda$, via the identity $\mu(n) = \sum_{d^2|n}\mu(d)\lambda\left(\frac{n}{d^2}\right)$.
\end{rem}

In this paper, we are concerned with the relationship between the Liouville function and Siegel zeros. Our ultimate aim is to extend the work of Germ{\'a}n and K{\'a}tai \cite{Liou_Siegel}, where they studied $2$-point correlations of the Liouville assuming the existence of Siegel zeros:

\begin{thm}\cite[Theorem 2]{Liou_Siegel}
\label{Liou_Siegel}
Let $\{q_\ell\}_\ell$ be an increasing sequence of positive integers with corresponding sequence of real primitive characters $\{\chi_\ell \Mod{q_\ell}\}_\ell$. Suppose that $L(s,\chi_\ell)$ has a Siegel zero $\beta_\ell:=1-\frac{1}{\eta_\ell\log q_\ell}$ with $\eta_\ell>\exp\exp (30)$ for all $\ell\in\N$. Then, there exists a constant $c>0$ and a function $\varepsilon(x)\tends 0$ as $x\tends \infty$ such that
\begin{align*}
    \frac{1}{x}\Biggr|\sum_{n\leq x}\lambda(n)\lambda(n+1)\Biggr|\leq \frac{c}{\log\log \eta_\ell} + \varepsilon(x),
\end{align*}
uniformly for $x\in[q_\ell^{10},q_\ell^{(\log\log \eta_\ell)/3}]$.
\end{thm}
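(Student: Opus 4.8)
The plan is to exploit the principle — already visible in the work quoted above and in \cite{Chowla_Finite} — that a Siegel zero forces $\lambda$ to \emph{pretend to be} the real primitive character $\chi:=\chi_\ell$ on the range of scales in play. Granting a sufficiently quantitative form of this, the pretentious/Hal\'asz theory of correlations of multiplicative functions (in the spirit of Klurman's and of Matom\"aki--Radziwi{\l\l}--Tao's correlation estimates) reduces the two-point correlation of $\lambda$ to the corresponding sum for $\chi$, namely $\sum_{n\le x}\chi(n)\chi(n+1)=\sum_{n\le x}\chi\bigl(n(n+1)\bigr)$, which is small by Weil's bound. The whole of the difficulty is to carry this out uniformly in the family $\{q_\ell,\chi_\ell\}_\ell$ and quantitatively in $\eta_\ell$, so that one obtains the precise bound $c/\log\log\eta_\ell+\varepsilon(x)$ on the stated window rather than merely $o(x)$ for a fixed sequence.

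\emph{Step 1: transferring the Siegel zero to $\lambda$.} I would begin from the explicit formula for $\psi(x,\chi)=\sum_{n\le x}\Lambda(n)\chi(n)$. The exceptional zero $\beta_\ell=1-\tfrac1{\eta_\ell\log q_\ell}$ contributes the term $-x^{\beta_\ell}/\beta_\ell=-x\bigl(1+O(\tfrac{\log x}{\eta_\ell\log q_\ell})\bigr)$, while — because $\beta_\ell$ is so close to $1$ — the Deuring--Heilbronn repulsion phenomenon pushes every other zero of $L(s,\chi)$ and of $\zeta$ far enough to the left that the remaining contribution is $O\bigl(x\exp(-c\sqrt{(\log x)(\log\eta_\ell)/\log q_\ell})\bigr)$. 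Both error terms are $o(1)$ precisely when $q_\ell^{10}\le x\le q_\ell^{(\log\log\eta_\ell)/3}$, and this is exactly what pins down the window; on it one gets $\psi(x,\chi)=-x+o(x)$. Stripping off prime powers and summing by parts turns this into $\sum_{p\le x}\chi(p)\log p=-x+o(x)$ on the window, which says that $\lambda$ is close to $\chi$ in the pretentious metric at scale $x$, and — again by Deuring--Heilbronn, now applied to twisted $L$-functions — that no twist $\chi(n)n^{it}$ with $t\neq 0$ does better across the whole window.

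\emph{Step 2: from the approximation to the correlation bound.} Feeding Step 1 into a quantitative correlation estimate for a multiplicative function that pretends to be a character leaves one to bound $\tfrac1x\bigl|\sum_{n\le x}\chi(n(n+1))\bigr|$ together with an error that decreases as the approximation of Step 1 improves. The character sum is direct: $n\mapsto\chi(n(n+1))$ is periodic mod $q_\ell$, and since $n(n+1)$ is not a perfect square (up to the obvious ramified factors), the complete sum over a period is $q_\ell^{o(1)}$ by Weil, so P\'olya--Vinogradov completion gives $\sum_{n\le x}\chi(n(n+1))\ll q_\ell^{1/2+o(1)}$, which is $o(x)$ as soon as $x\ge q_\ell^{10}$. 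The error term is where the entropy budget $\log 2$ afforded by $\lambda$ being $\pm1$-valued, once propagated across the long range of scales $[q_\ell^{10},q_\ell^{(\log\log\eta_\ell)/3}]$, gets converted into the bound $\ll 1/\log\log\eta_\ell$; combining the two pieces yields the asserted estimate with $\varepsilon(x)$ absorbing the $o(x)$ terms.

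\emph{Expected main obstacle.} The crux is the quantitative correlation estimate of Step 2. A naive term-by-term substitution of $\chi(n)$ for $\lambda(n)$ is worthless: the primes dividing $q_\ell$ and the (possibly many) primes with $\chi(p)=+1$ do not feel the Siegel zero at all, so replacing $\lambda$ by $\chi$ destroys every bit of cancellation in the correlation. One has to argue at the level of mean values instead — keeping $\chi(n(n+1))$ and the completely multiplicative ``defect'' $\lambda(n)\chi(n)$ coupled and applying Hal\'asz-type inequalities — or, in the spirit of Tao's entropy-decrement method, propagate the bound on $\tfrac1x\sum_{n\le x}\lambda(n)\lambda(n+p)$ across the whole window of scales and of primes $p$. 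This is exactly why the statement needs the interval $[q_\ell^{10},q_\ell^{(\log\log\eta_\ell)/3}]$ and not a single scale, and why the final bound carries the iterated-logarithm factor $1/\log\log\eta_\ell$; making all of this uniform in $\ell$ with explicit constants is the real work.
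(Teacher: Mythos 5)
Your proposal takes a genuinely different route from the one the paper (and Germ\'an--K\'atai) actually follows, and the two are worth contrasting. Your Step 1 captures the correct intuition and is, in spirit, exactly what Lemma \ref{GK-Primes} of the paper encodes: a Siegel zero forces $\chi(p)=-1$ for all but a thin set of primes $p$ in the window $[q^{10},q^{(\log\log\eta)/3}]$, quantified as $\sum_{p\le x,\,\chi(p)=1}\frac{\log p}{p}\ll\exp(\log x/\log q)(\log q)(\log\eta)^{-1/2}$. But from there you swerve into Hal\'asz/pretentious correlation theory, while the paper instead follows Heath-Brown's sieve-theoretic template. The key object is the \emph{hybrid} completely multiplicative function $\lambda_r$, equal to $\lambda(p)$ for $p\le r$ and $\chi(p)$ for $p>r$ with $r=x^{1/\log\log\eta}$ (for $k=2$). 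Lemma \ref{main_lemma} shows $\frac1x\sum_{n\le x}\lambda(n)\lambda(n+1)=\frac1x\sum_{n\le x}\lambda_r(n)\lambda_r(n+1)+O(1/\log\log\eta)$ by a literal term-by-term comparison; the correlation of $\lambda_r$ is then bounded by decomposing each $n$, $n+1$ into $r$-smooth and $r$-sifted parts, applying the Fundamental Lemma of Sieve Theory to control the sifted parts in each residue class mod $q$, and finishing with Weil's bound on $\sum_{n\bmod q}\chi(n(n+1))$. No Hal\'asz inequality, no explicit formula, no entropy decrement anywhere.

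This is also where I think your ``expected main obstacle'' paragraph slightly misdiagnoses the problem, in a way that would cost you if you tried to carry the proposal out. You write that replacing $\lambda$ by $\chi$ ``destroys every bit of cancellation'' because of primes $p\mid q$ and primes with $\chi(p)=+1$, and you conclude one must work at the level of mean values via Hal\'asz-type estimates or entropy decrement. But the direct substitution is \emph{not} worthless: the real issue is only the small primes $p\le r$, where the Siegel zero gives no information about $\chi(p)$, and the hybrid $\lambda_r$ fixes precisely this by keeping $\lambda(p)$ there. The large primes with $\chi(p)=1$ or $p\mid q$ are handled by Lemma \ref{GK-Primes} and a trivial $\sum_{p\mid q}\log p/p\ll\log\log q$ estimate, respectively, and the cancellation in the $\lambda_r$-correlation comes from Weil, not from the Siegel zero at all. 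So your Step 2 is both where the genuine gap lies — you invoke a ``quantitative correlation estimate for a multiplicative function that pretends to be a character'' with an explicit $1/\log\log\eta_\ell$ error, but no such off-the-shelf estimate exists in the pretentious literature with that rate, and producing one is at least as hard as the sieve argument you are trying to avoid — and where the approach is harder than it needs to be. The hybrid-plus-FLST route is elementary, gives the stated rate directly (the $\varepsilon(x)$ comes from the preliminary sieve that controls the level of distribution, Lemma \ref{Pre-Sieve}), and generalizes cleanly to $k$-point correlations as in Theorem \ref{main_thm}.
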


The key to the work of Germ{\'a}n and K{\'a}tai is to approximate $\lambda$ by $\chi_\ell$ on "large" primes via the completely multiplicative function $\lambda_r$ defined by
\begin{align*}
    \lambda_r(p)
    :=
    \left\{\def\arraystretch{1.2}%
    \begin{array}{@{}l@{\quad}l@{}}
        \lambda(p) & \mbox{if $p\leq r,$}\\
        \chi_\ell(p) & \mbox{if $p>r$,}
    \end{array}\right.
\end{align*}
for some well-chosen parameter $r=r_\ell$. Then, using similar ideas as Heath-Brown in his work on Siegel zeros and the Twin Prime Conjecture \cite{HB_TP}, Germ{\'a}n and K{\'a}tai show that the $2$-point correlations of $\lambda$ are well approximated by the $2$-point correlations of $\lambda_r$, along a subsequence. The added benefit to this approach is that we can now use sieve theory, together with the definition of $\lambda_r$, to relate the $2$-point correlations of $\lambda$ to some character sum, which is known to be small. Following this same line of reasoning, we prove the corresponding result for (general) $k$-point correlations:

\begin{thm}
\label{main_thm}
Let $\{q_\ell\}_\ell$ be an increasing sequence of positive integers with corresponding sequence of real primitive characters $\{\chi_\ell \Mod{q_\ell}\}_\ell$. Suppose that $L(s,\chi_\ell)$ has a Siegel zero $\beta_\ell:=1-\frac{1}{\eta_\ell\log q_\ell}$ with $\eta_\ell>\exp\exp (30)$ for all $\ell\in\N$. Then, for any distinct (positive) integers $h_1,\dots,h_k$, there exists a constant $c_k=c(h_1,\dots,h_k)>0$ such that
\begin{align*}
    \frac{1}{x}\Biggr|\sum_{n\leq x}\lambda(n+h_1)\cdots\lambda(n+h_k)\Biggr|
    \leq
    \frac{c_k}{(\log\log\eta_\ell)^{1/2}(\log\eta_\ell)^{1/12}},
\end{align*}
uniformly for $x\in[q_\ell^{10},q_\ell^{(\log\log \eta_\ell)/3}]$.
\end{thm}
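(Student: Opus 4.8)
The plan is to adapt the Heath-Brown and Germ\'an--K\'atai strategy to general $k$: approximate $\lambda$ by the completely multiplicative function $\lambda_r$ introduced above, show that passing from $\lambda$ to $\lambda_r$ perturbs the $k$-point correlation only slightly, and then bound the correlation of $\lambda_r$ by sieve methods, reducing it to a complete character sum modulo $q_\ell$ controlled by Weil's bound. Write $\chi=\chi_\ell$, $q=q_\ell$, $\eta=\eta_\ell$, $\beta=\beta_\ell$, and $u=\log x/\log q\in[10,(\log\log\eta)/3]$; the smoothing parameter $r=r_\ell$ is chosen at the end. Two preliminary remarks organize the argument. First, from the effective lower bound $L(1,\chi)\gg q^{-1/2}$ together with $1-\beta\gg L(1,\chi)/\log^2 q$ one gets $\eta\ll q^{1/2}\log q$, so that $\eta>\exp\exp(30)$ forces $q$ --- hence $\sqrt q$ --- to be large; indeed $q\gg\eta^{2-o(1)}$. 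Second, the Deuring--Heilbronn repulsion attached to the exceptional zero $\beta=1-\frac1{\eta\log q}$ enlarges the zero-free region of $L(s,\chi)$, giving $\psi(y,\chi)=-\frac{y^{\beta}}{\beta}+(\text{small})$ and hence $\sum_{y<p\le y'}\frac{1+\chi(p)}{p}\ll\frac{\log(y'/y)}{\eta\log q}+(\text{small})$ uniformly for $q^{10}\le y\le y'\le q^{(\log\log\eta)/3}$; in words, $\chi(p)=-1$ for all but a negligible proportion of primes $p$ in this range.

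For the comparison step, note that $\lambda$ and $\lambda_r$ are completely multiplicative and agree on primes $\le r$, so $\lambda(n)\neq\lambda_r(n)$ forces $n$ to have a prime factor $p>r$ with $\chi(p)\neq-1$. By the second remark the number of such $n\le x$, for a fixed shift, is $\ll x\big(\sum_{r<p\le x}\tfrac{1+\chi(p)}{p}+\tfrac{\log q}{r}\big)$, which is small as soon as $r$ is not too tiny; a union bound over $h_1,\dots,h_k$ then yields
\begin{align*}
\Bigl|\sum_{n\le x}\prod_{i=1}^{k}\lambda(n+h_i)-\sum_{n\le x}\prod_{i=1}^{k}\lambda_r(n+h_i)\Bigr|\ \ll_k\ x\Bigl(\sum_{r<p\le x}\tfrac{1+\chi(p)}{p}+\tfrac{\log q}{r}\Bigr),
\end{align*}
so it suffices to bound the $k$-point correlation of $\lambda_r$.

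For that, factor $n+h_i=a_ib_i$ with $a_i$ the $r$-smooth part and $b_i$ the $r$-rough part, so $\lambda_r(n+h_i)=\lambda(a_i)\chi(b_i)$. A Rankin-type bound shows that, apart from $\ll_k x\,(\log r)^{2}/\log Y$ exceptional $n$, every $a_i$ is $\le Y$; discard those, and on the remaining $n$ detect "$(n+h_i)/a_i$ is $r$-rough" via the sieve identity $\mathbbm{1}[P^{-}(m)>r]=\sum_{d\mid m,\ P^{+}(d)\le r,\ d\le Z}\mu(d)+(\text{truncation error})$, then replace $\chi(b_i)$ by $\chi(n+h_i)\chi(a_i)$ (valid off the sparse set where some $\gcd(n+h_i,q)>1$). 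This rewrites the correlation of $\lambda_r$, up to controlled errors, as a bounded combination over $r$-smooth parameters $a_i,d_i\le YZ$ of sums $\sum_{n\le x,\ n\equiv\nu\ (L)}\prod_i\chi(n+h_i)$, where $L=\lcm(a_1d_1,\dots,a_kd_k)$. Completing each such sum modulo $q$ produces $\frac{x}{qL}\sum_{t\bmod q}\chi\!\big(\prod_i(t+h_i')\big)+O(q)$. Since the $h_i$ are distinct, $\prod_i(X+h_i')$ is separable of degree $k$, hence not a constant times a square modulo $q$ except possibly at the $O_k(1)$ primes dividing $\prod_{i<j}(h_i-h_j)$, which we absorb into $c_k$; decomposing $\chi$ over the prime-power components of the fundamental discriminant $q$ and applying Weil's bound componentwise gives $\sum_{t\bmod q}\chi(\prod_i(t+h_i'))\ll_k\sqrt q$. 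Keeping $YZ$ below $x^{1/(2k)}$ (so $L\le x^{1/2}$, and $x\ge q^{10}$ gives $x/L\gg q$, making the completion gainful), and summing over $a_i,d_i$ using a standard lcm-reciprocal estimate, leaves a main term $\ll_k x\,q^{-1/2}(\log x)^{O_k(1)}$, which by the first remark and $\log x\le\tfrac{\log\log\eta}{3}\log q$ is far below the target.

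It remains to choose $r$ and collect the errors --- the comparison error, the "large $r$-smooth part" error, the sieve-truncation error, and the Weil main term. With $r$ a slowly growing function of $\eta$ (so $2^{\pi(r)}$ and the primes $p\mid q$ with $p>r$ stay harmless) and $Y,Z$ suitable small powers of $x$, everything except the first two errors is negligible, and one balances the "large smooth part" loss against the sieve-truncation loss, using $u\le(\log\log\eta)/3$ to convert the residual $x$- and $q$-dependence into $\eta$-dependence, to reach $\ll_k(\log\log\eta)^{-1/2}(\log\eta)^{-1/12}$. The main obstacle --- and the point genuinely new beyond the case $k=2$ of \cite{Liou_Siegel} --- is this final optimization: the sieve must now detect $k$ roughness conditions simultaneously, so the rewriting of the $\lambda_r$-correlation produces a $k$-fold divisor sum whose cross terms and lcm-denominators must be controlled with enough uniformity in the shifts, and the sieve loss has to be traded against the Weil saving and the range restriction so that $q_\ell$ disappears from the final inequality; by contrast the character-sum input (Weil for $\prod_i(X+h_i')$) and the exceptional-zero input (Deuring--Heilbronn) are classical.
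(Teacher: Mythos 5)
Your high-level strategy mirrors the paper's exactly: approximate $\lambda$ by $\lambda_r$, use the Heath-Brown/Germ\'an--K\'atai estimate to control the cost of the switch, factor each $n+h_i$ into $r$-smooth and $r$-rough parts, sieve out the roughness condition, reduce to a complete character sum modulo $q$ of a degree-$(k+1)$ polynomial, apply Weil, and optimize $r$, the smooth-part cutoff, and the level of distribution. The one place you genuinely diverge is the step where, instead of parametrizing the rough parts, you write $\chi(b_i)=\chi(n+h_i)\chi(a_i)$ so that the character sum becomes $\sum_{n}\prod_i\chi(n+h_i)$; the paper instead solves the linear system $a_ib_i=a_0b_0+h_i$ via Smith normal form, writes $b_i=b_i^*+ma_i^*$ with $a_i^*=\lcm(a_0,\dots,a_k)/a_i$, splits the free parameter $m$ into residue classes modulo $q$, and arrives at a complete sum $\sum_{n\bmod q}\chi\bigl(\prod_i(b_i^*+na_i^*)\bigr)$. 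Your version is undeniably cleaner when it applies, but it has a real gap.

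The replacement $\chi(b_i)=\chi(n+h_i)\chi(a_i)$ is only an identity when $\chi(a_i)\neq 0$, i.e.\ when $\gcd(a_i,q)=1$. When some prime $p\mid q$ with $p\le r$ divides $n+h_i$, that prime sits in the smooth part $a_i$, so $\chi(a_i)=0$ and $\chi(n+h_i)=0$, yet $\lambda_r(n+h_i)=\lambda(a_i)\chi(b_i)$ can still be $\pm 1$ (since $b_i$ may be coprime to $q$). You dismiss this as a ``sparse set,'' but the set of $n\le x$ for which some $p\mid q$, $p\le r$, divides $n+h_i$ for some $i$ has density roughly $1-\prod_{p\mid q,\,p\le r}\bigl(1-\tfrac{k}{p}\bigr)$, and since a fundamental discriminant $q$ may well satisfy $\sum_{p\mid q}\tfrac1p\asymp\log\log\log q\to\infty$, this density can tend to $1$. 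So the error you incur is not $o(x)$, let alone of the target size $x/\bigl((\log\log\eta)^{1/2}(\log\eta)^{1/12}\bigr)$, and the reduction to $\prod_i\chi(n+h_i)$ is not valid in general. The paper does not face this problem because it never trades $\chi(b_i)$ for $\chi(n+h_i)$: the vanishing of $\chi$ at multiples of primes dividing $q$ is absorbed harmlessly into the complete sum $\sum_{n\bmod q}\chi(\prod_i(b_i^*+na_i^*))$, and separately into the factor $\prod_{p\mid q}(1-N(p)/p)$ in the secondary error term. Fixing your version would essentially force you back to tracking $b_i$ as a function of the free variable, i.e.\ to the paper's parametrization (or an ad hoc variant splitting each $a_i$ into its $q$-coprime and $q$-sharing parts).

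A smaller point: your ``Deuring--Heilbronn'' remark asserts $\sum_{y<p\le y'}\tfrac{1+\chi(p)}{p}\ll\tfrac{\log(y'/y)}{\eta\log q}+(\text{small})$ with the secondary term unquantified. The bound the proof actually needs (and the one Heath-Brown and Germ\'an--K\'atai prove) is $\sum_{p\le x,\ \chi(p)=1}\tfrac{\log p}{p}\ll\exp(\log x/\log q)(\log q)(\log\eta)^{-1/2}$ uniformly for $x\in[q^{10},q^{(\log\log\eta)/3}]$; the $(\log\eta)^{-1/2}$, not the optimistic $1/(\eta\log q)$, is what drives the final exponents $\tfrac12$ and $\tfrac1{12}$. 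As written, the ``(small)'' swallows the entire quantitative content of the comparison step. This is a presentational gap rather than a conceptual one, since inserting the Germ\'an--K\'atai lemma verbatim repairs it, but it does mean your sketch does not actually produce the stated $(\log\log\eta)^{-1/2}(\log\eta)^{-1/12}$ rate.
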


\begin{rem}
Since $\eta_\ell\rightarrow\infty$ as $\ell\rightarrow\infty$ (see Section \ref{Siegel}), Theorem \ref{main_thm} thus establishes Chowla's Conjecture along a subsequence, assuming the existence of Siegel zeros. Furthermore, one should think of Theorem \ref{Liou_Siegel} as the multiplicative analogue of Heath-Brown's result on Twin Primes and Siegel zeros \cite{HB_TP}, while Theorem \ref{main_thm} is the multiplicative analogue of the Hardy\textendash Littlewood $k$-tuples conjecture (which is also known to hold, assuming the existence of Siegel zeros).
\end{rem}

\begin{rem}
Notice that Theorem \ref{main_thm} is an improvement on Theorem \ref{Liou_Siegel} in two respects: first, we can handle general $k$-point correlations (as opposed to the case where $k=2$); further, we have an exponential improvement in our bounds (which follows from using a different version of the Fundamental Lemma of Sieve Theory and from taking a different choice of $r$ than those used in \cite{Liou_Siegel}; see Appendix \ref{Sieve_Theory} and the end of Section \ref{Proof}). 

Note further that the work in \cite{Liou_Siegel} deals only with $h_1=0$ and $h_1=1$, but the proof extends easily to general $h_1$, $h_2$ (with minor modifications). The main difficulty in going from the case $k=2$ to general $k$-point correlations lies in being able to parametrize integer solutions of the following system of linear equations, in the unknowns $x_0,x_1,\dots x_k$, for some integers $a_0,a_1,\dots, a_k$:
\begin{align*}
        \left\{\def\arraystretch{1.2}%
        \begin{array}{@{}l@{\quad}l@{}}
            a_1x_1 = a_0x_0 + h_1,\\
            \vdots\\
            a_kx_k = a_0x_0 + h_k.
        \end{array}\right.
\end{align*}
It is to verify that, if this system is solvable, then the solutions are given by $x_i=x_i^*+m\lcm(a_0,a_1,\dots,a_k)/a_i$, where $(x_0^*,x_1^*,\dots,x_k^*)$ is one particular solution and $m\in \Z$ (essentially generalizing Bezout's Identity to $k$ equations). From there, we need to bound character sums evaluated at the polynomial $f(n):=(x_0^*+na_0^*)\cdots (x_k^*+na_k^*),$ where $a_i^*:=\lcm(a_0,\dots,a_k)/a_i$, as $n$ varies over one complete residue class modulo $q_\ell$. Fortunately for us, these character sums exhibit squareroot cancellation via Weil's Bound, provided that $f$ is not a square. For more details, see Appendices \ref{Character_Sums} and \ref{Parametrization}.
\end{rem}


\subsection{Sarnak's Conjecture}

We should think of the previous results as instances of the so-called "M{\"o}bius Randomness Law," which states that the values of $\lambda$ (or $\mu$) are random enough so that the twisted sums $\sum_{n\leq x}\lambda(n)a_n$ should be small for any "reasonable" sequence of complex numbers $\{a_n\}_n$; see \cite[Section 13]{IwaKow}. A famous conjecture due to Sarnak characterizes one such family of "reasonable" sequences as those which are \textit{deterministic}:

\begin{defn}
Given a bounded sequence $f:\N\rightarrow \C$, its \textbf{topological entropy} is equal to the least exponent $\sigma$ for which the set
\[
\{(f(n+1),f(n+1),\dots,f(n+m))\}_{n=1}^\infty\subset \C^m
\]
can be covered by $\mathcal{O}(\exp(\sigma m + o(m))$ balls of radius $\epsilon$ (in the $\ell^\infty$ metric), for any fixed $\epsilon>0$, as $m\rightarrow\infty$. In the case where $\sigma=0$, we say that $f$ is \textbf{deterministic}.
\end{defn}

\begin{conj}[Sarnak's Conjecture]
Let $f:\N\rightarrow \C$ be a deterministic sequence. Then,
\begin{align*}
    \sum_{n\leq x}\lambda(n)f(n) = o_f(x),
\end{align*}
as $x\rightarrow\infty.$
\end{conj}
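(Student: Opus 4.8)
The plan is to deduce Sarnak's Conjecture from Chowla's Conjecture by Sarnak's disjointness argument, which runs through the theory of joinings. First, realize $f$ dynamically: let $X$ be the orbit closure of $f$ under the shift $T$ inside $K^{\N}$ (or its natural extension), where $K=\overline{f(\N)}$ is compact; the hypothesis that $f$ is deterministic translates exactly into $h_{\mathrm{top}}(X,T)=0$, so by the variational principle every $T$-invariant Borel probability measure on $X$ has measure-theoretic entropy $0$. Writing $F$ for the (continuous) zeroth-coordinate projection and $z_0=f\in X$, we have $f(n)=F(T^nz_0)$, and the goal becomes $\frac1N\sum_{n\le N}\lambda(n)F(T^nz_0)\to0$, equivalently $\sum_{n\le N}\lambda(n)f(n)=o_f(N)$.

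Second, I would use Chowla's Conjecture to identify the process generated by $\lambda$. Let $S$ be the shift on $\{-1,1\}^{\Z}$ and let $\nu$ be a weak-$*$ limit point of the empirical measures $\frac1N\sum_{n\le N}\delta_{S^n\lambda}$. Cylinder functions are polynomials in the coordinates, and since each coordinate squares to $1$ they reduce to multilinear monomials $x_{h_1}\cdots x_{h_k}$ with distinct indices; Chowla's Conjecture says precisely that $\frac1N\sum_{n\le N}\lambda(n+h_1)\cdots\lambda(n+h_k)\to0$, i.e. $\int x_{h_1}\cdots x_{h_k}\,d\nu=0$. Hence all joint moments of $\nu$ coincide with those of the Bernoulli measure $(\tfrac12,\tfrac12)^{\Z}$, so $\nu$ equals that Bernoulli measure, the limit exists along the full sequence of $N$, and $\lambda$ is generic for it. In particular $(\{-1,1\}^{\Z},\nu,S)$ is a Kolmogorov (K-)system.

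Third, I would invoke Furstenberg's theorem that a K-system is disjoint from every zero-entropy system. Pass to a subsequence of $N$ along which the joint empirical measures $\frac1N\sum_{n\le N}\delta_{(S^n\lambda,\,T^nz_0)}$ converge to a measure $\theta$ on $\{-1,1\}^{\Z}\times X$; then $\theta$ is $(S\times T)$-invariant, its first marginal is $\nu$ (a K-system) and its second marginal is a $T$-invariant, hence zero-entropy, measure $\rho$ on $X$, so by disjointness $\theta=\nu\otimes\rho$. Integrating the continuous function $(x,z)\mapsto x_0F(z)$ against $\theta$ gives $\bigl(\int x_0\,d\nu\bigr)\bigl(\int F\,d\rho\bigr)=0$, because $\int x_0\,d\nu=0$. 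Since every subsequential limit of $\frac1N\sum_{n\le N}\lambda(n)F(T^nz_0)$ equals such an integral, the whole sequence tends to $0$, which is the assertion.

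The main obstacle is the input rather than the argument: the chain above needs Chowla's Conjecture in full, for \emph{all} scales, whereas Theorem \ref{main_thm} delivers it only for $x$ in the windows $[q_\ell^{10},q_\ell^{(\log\log\eta_\ell)/3}]$ and only under the existence of Siegel zeros. Feeding that weaker input into the same argument identifies the empirical measures of $\lambda$ with the Bernoulli measure only along those windows, and hence only yields $\frac1x\sum_{n\le x}\lambda(n)f(n)\to0$ along the corresponding subsequence of $x$ — which is the conditional statement recorded in the abstract. To reach the conjecture exactly as stated one would have to remove both the Siegel-zero hypothesis and the restriction on $x$, i.e. prove Chowla's Conjecture unconditionally at every scale; this remains open for every $k\ge2$, and is the genuine gap between what this paper establishes and the conjecture above.
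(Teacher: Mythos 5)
You have correctly recognized that the statement you were asked to prove is an open conjecture: the paper does not prove it either, and your honest identification of the missing input (Chowla's Conjecture in full, at every scale, unconditionally) is exactly the right diagnosis. What the paper actually establishes is the conditional, subsequential version recorded as Corollary \ref{cor_Sarnak}, whose proof is delegated to ``Sarnak's argument verbatim'' as written up on Tao's blog. Your account of the implication Chowla $\Rightarrow$ Sarnak is sound, but it takes a genuinely different route from the one the paper cites: you package the implication ergodic-theoretically, identifying the empirical measures of $\lambda$ with the Bernoulli measure via moments, noting that the Bernoulli shift is a K-system, and invoking Furstenberg's disjointness of K-systems from zero-entropy systems to factor the limiting joining. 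The argument on Tao's blog is instead combinatorial: Chowla forces approximate equidistribution of the sign patterns $(\lambda(n+1),\dots,\lambda(n+m))$, while zero topological entropy means $f$ exhibits only $\exp(o(m))$ patterns of length $m$, and a counting/Cauchy--Schwarz argument then kills the correlation. The two routes buy different things: the joining formulation is cleaner conceptually and generalizes to the logarithmic and subsequential settings studied in the literature the paper cites, while the combinatorial argument is elementary, quantitative, and more directly compatible with the windowed, quantitative form of Theorem \ref{main_thm} (which only gives Chowla-type bounds uniformly for $x\in[q_\ell^{10},q_\ell^{(\log\log\eta_\ell)/3}]$, with an error depending on $k$). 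Your closing paragraph correctly observes that feeding this restricted input into either argument yields only orthogonality along the corresponding subsequence of scales; that is precisely Corollary \ref{cor_Sarnak}, and the remaining distance to the conjecture as stated is not a defect of your argument but the openness of Chowla's Conjecture itself.
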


Although Sarnak's Conjecture has yet to be resolved, there are many instances for which the conjecture holds. For example, in the case where $f$ is constant, Sarnak's Conjecture is equivalent to the PNT; in the case where $f$ is periodic, it is equivalent to the PNT in arithmetic progressions. For a more thorough survey on various instances for which Sarnak's Conjecture holds, see \cite{Survey-Sarnak-1,Survey-Sarnak-2}. 

By a well-known argument due to Sarnak, we also know that Chowla's Conjecture implies Sarnak's Conjecture; as a result, Theorem \ref{main_thm} yields the following:

\begin{cor}
\label{cor_Sarnak}
Let $f:\N\rightarrow \C$ be a deterministic sequence. Under the hypotheses of Theorem \ref{main_thm},
\begin{align*}
    \sum_{n\leq x}\lambda(n)f(n) = o_f(x),
\end{align*}
for $x\in[q_\ell^{10},q_\ell^{(\log\log \eta_\ell)/3}]$.
\end{cor}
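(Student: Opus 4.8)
The plan is to deduce Corollary \ref{cor_Sarnak} from Theorem \ref{main_thm} by running Sarnak's standard reduction of Möbius disjointness to Chowla-type correlation bounds, being careful to check that everything goes through with the correlation bounds only holding on the dyadic-type window $x\in[q_\ell^{10},q_\ell^{(\log\log\eta_\ell)/3}]$ rather than for all large $x$. First I would recall the combinatorial heart of Sarnak's argument, which is typically phrased via the Davenport–Erdős / Kátai–Bourgain–Sarnak–Ziegler (BSZ) criterion: if $f$ is bounded and $\sum_{p\le P}\frac{1}{p}\bigl|\frac{1}{x}\sum_{n\le x}f(pn)\overline{f(qn)}\bigr|$ is small for large primes $p\ne q$, then $\frac1x\sum_{n\le x}\lambda(n)f(n)=o(x)$; the link to Chowla is that averaging the correlation bound of Theorem \ref{main_thm} over shifts controls exactly these bilinear-in-$p,q$ averages once one uses that $f$ has topological entropy zero, so the number of distinct ``words'' $(f(n+1),\dots,f(n+m))$ grows subexponentially. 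Concretely, I would fix $\epsilon>0$, choose $m=m(\epsilon)$, partition $[1,x]$ into blocks of length $m$, and use the entropy-zero hypothesis to cover the block-patterns of $f$ by $e^{o(m)}$ balls of radius $\epsilon$; on each ball $f$ is approximated by a fixed pattern, and the contribution of $\lambda$ twisted against a fixed $\pm$ (or complex) pattern of length $m$ is controlled by the $k$-point correlations of $\lambda$ for $k\le m$ via an expansion of $\prod$ into the standard ``$\lambda$ at shifted arguments'' sums, to which Theorem \ref{main_thm} applies.

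The key steps, in order, are: (1) state the BSZ/Kátai-type criterion that reduces $\sum_{n\le x}\lambda(n)f(n)=o(x)$ to smallness of $\frac1x\sum_{n\le x}f(n+h_1)\cdots f(n+h_k)$-type averages, or more precisely to the ``disjointness from multiplicative functions'' packaging due to Tao; (2) observe that since $f$ is deterministic, for each $\epsilon$ there is a finite set of patterns so that the twisted sum is, up to $O(\epsilon x)$, a finite linear combination (with bounded coefficients, the number of terms depending only on $\epsilon$ and $m$) of sums of the form $\sum_{n\le x}\lambda(n)\prod_{j}\mathbf{1}[\text{pattern condition on }f(n+j)]$, and then expand the indicator structure so that what remains to bound is genuinely $\frac1x\bigl|\sum_{n\le x}\lambda(n+h_1)\cdots\lambda(n+h_k)\bigr|$ for finitely many tuples with $k,h_j=O_\epsilon(1)$; (3) apply Theorem \ref{main_thm} to each such tuple, noting the bound $\frac{c_k}{(\log\log\eta_\ell)^{1/2}(\log\eta_\ell)^{1/12}}$ is $o(1)$ as $\ell\to\infty$ (equivalently as $\eta_\ell\to\infty$, which holds by the discussion in Section \ref{Siegel}), uniformly for $x$ in the stated window; (4) combine to get $\frac1x|\sum_{n\le x}\lambda(n)f(n)|\le C\epsilon + o_{\ell\to\infty}(1)$, and since $\epsilon$ was arbitrary and the window grows with $\ell$, conclude $\sum_{n\le x}\lambda(n)f(n)=o_f(x)$ for $x\in[q_\ell^{10},q_\ell^{(\log\log\eta_\ell)/3}]$.

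The main obstacle I expect is \emph{bookkeeping the uniformity}: Sarnak's argument in its usual form produces $o(x)$ as $x\to\infty$ from a correlation bound valid for \emph{all} large $x$, whereas here the correlation bound is only available on a moving window depending on $\ell$. One has to make sure the ``$\epsilon$ then $m(\epsilon)$ then $\ell$ large'' order of quantifiers is legitimate — i.e.\ that for each fixed $\epsilon$ the finitely many tuples $(h_1,\dots,h_k)$ involved have $k$ and $\max_j h_j$ bounded in terms of $\epsilon$ only, so that Theorem \ref{main_thm}'s $\ell$-dependent bound genuinely tends to $0$ before we need to send $\epsilon\to0$. A secondary technical point is that Theorem \ref{main_thm} is stated for real $\pm1$-valued correlations of $\lambda$ at \emph{distinct} positive shifts, so when the expansion produces repeated shifts one uses $\lambda^2=1$ to collapse them (reducing $k$), and when a pattern forces $\lambda$ to appear linearly against an indicator that is itself not multiplicative one separates the $\lambda$-aspect from the $f$-aspect exactly as in the entropy argument; none of this is deep, but it must be written carefully. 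I would therefore structure the proof as: (i) reduce to a finite pattern sum via determinism; (ii) reduce each pattern sum to finitely many genuine $k$-point correlations; (iii) invoke Theorem \ref{main_thm} with explicit attention to the window and to $\eta_\ell\to\infty$; (iv) let $\epsilon\to0$.
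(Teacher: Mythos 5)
Your proposal follows the same route the paper takes: the paper's proof simply cites Sarnak's argument (as exposed on Tao's blog), and your steps (cover block-patterns of $f$ using the zero-entropy hypothesis, reduce to finitely many genuine $k$-point correlations with $k$ and the shifts bounded in terms of $\epsilon$ alone, invoke Theorem \ref{main_thm} uniformly in the window $[q_\ell^{10},q_\ell^{(\log\log\eta_\ell)/3}]$ using $\eta_\ell\to\infty$, then send $\epsilon\to 0$) are precisely the content of that argument, with the right care about the order of quantifiers given that the correlation bound only holds on an $\ell$-dependent window. One small caveat: your opening sentence frames the reduction via the K\'atai/Bourgain--Sarnak--Ziegler multiplicative-orthogonality criterion, which is actually a \emph{different} route to M\"obius disjointness (one that bypasses Chowla altogether); since the rest of your outline is the genuine entropy-covering argument of Sarnak, this is just a mislabel and not a gap.
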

\begin{proof}
The proof follows Sarnak's argument verbatim, the details of which can be found on Tao's blog\footnote{https://terrytao.wordpress.com/2012/10/14/the-chowla-conjecture-and-the-sarnak-conjecture/}. For further work on the relationship between Chowla's Conjecture and Sarnak's Conjecture, see \cite{Chowla_Exposition,Chowla-Sarnak_Subsequence,Sarnak-Subsequence}.
\end{proof}


\subsection{Outline} 
Our paper is split as follows: in Section \ref{Siegel}, we give a brief introduction on Siegel zeros and the work of Heath-Brown on counting the number of primes $p$ such that $\chi(p)=1$; in Section \ref{Reduction}, we use the work of Germ{\'a}n\textendash K{\'a}tai/Heath-Brown to relate the $k$-point correlations of $\lambda$ to those of $\lambda_r$; from there, we use some estimates on character sums, sieve theory, and elementary number theory to prove Theorem \ref{main_thm}; Appendices \ref{Sieve_Theory}, \ref{Character_Sums}, and \ref{Parametrization} contain the relevant background information on sieve theory, character sums, and Diophantine equations, which we use freely in the proof of Theorem \ref{main_thm}.
\section{Siegel zeros and primes $p$ such that $\chi(p)=1$}
\label{Siegel}

In this section, we provide a brief introduction to Siegel zeros, culminating in the work of Heath-Brown on primes $p$ such that $\chi(p)=1$. To begin, we must first discuss zero-free regions of Dirichlet $L$-functions associated to Dirichlet characters $\chi \Mod{q}$; we follow Chapter 12 of \cite{Dimitris}:

\begin{thm}
\label{Exc-Thm}
Let $q\geq 3$ and set $Z_q(s):=\prod_{\chi \Mod{q}}L(s,\chi)$. Then, there is an absolute constant $c>0$ such that the region $\Re(s) \geq 1-\frac{c}{\log(q\tau)},$ where $\tau=\max\{1,|\Im(s)|\}$, contains at most one zero of $Z_q$. Furthermore, if this exceptional zero exists, then it is necessarily a real, simple zero of $Z_q$, say $\beta_1\in[1-c/\log q,1]$, and there is a real, non-principal character $\chi_1\Mod{q}$ such that $L(\beta_1,\chi_1)=0$.
\end{thm}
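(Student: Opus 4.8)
The plan is to run the classical de la Vall\'ee Poussin zero-free-region argument for the product $Z_q$, following Chapter~12 of \cite{Dimitris}. Throughout, $\chi$ ranges over Dirichlet characters mod $q$, $\chi_0$ is the principal one, and for $\Re(s)>1$ one has $-\tfrac{L'}{L}(s,\chi)=\sum_{n\ge1}\Lambda(n)\chi(n)n^{-s}$. The first ingredient I would record is the standard consequence of the Hadamard factorisation and the zero-counting bound $\#\{\rho:L(\rho,\chi)=0,\ |\Im\rho-t|\le1\}\ll\log(q(|t|+2))$: for $1<\sigma\le 2$ and $\chi\neq\chi_0$,
\[
-\Re\frac{L'}{L}(\sigma+it,\chi)\ \le\ C\log\big(q(|t|+2)\big)-\sum_{\rho}\Re\frac{1}{\sigma+it-\rho},
\]
where the sum runs over zeros $\rho$ with $|\Im\rho-t|\le1$ and each summand is $\ge0$ since $\sigma>1\ge\Re\rho$; for $\chi=\chi_0$ the pole of $L(s,\chi_0)$ at $s=1$ adds the term $\tfrac{\sigma-1}{(\sigma-1)^2+t^2}$ on the right-hand side.

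Next I would carry out the $3$--$4$--$1$ step. For a character $\chi$ and a real number $\gamma$, feeding the pointwise inequality $3+4\cos\theta+\cos2\theta=2(1+\cos\theta)^2\ge0$ into the Dirichlet series gives, for $\sigma>1$,
\[
0\ \le\ 3\Big(-\tfrac{L'}{L}(\sigma,\chi_0)\Big)+4\,\Re\Big(-\tfrac{L'}{L}(\sigma+i\gamma,\chi)\Big)+\Re\Big(-\tfrac{L'}{L}(\sigma+2i\gamma,\chi^2)\Big).
\]
Inserting the estimates above, keeping in the middle term only a putative zero $\rho=\beta+i\gamma$ of $L(s,\chi)$, and choosing $\sigma=1+\delta/\log(q(|\gamma|+2))$ for a small absolute $\delta$, I obtain $\tfrac{4}{\sigma-\beta}\le\tfrac{3}{\sigma-1}+C'\log(q(|\gamma|+2))$ whenever the $\chi^2$-term carries no pole, i.e.\ whenever $\chi^2\ne\chi_0$; this yields $\beta\le 1-c/\log(q(|\gamma|+2))$ and already rules out every zero of $L(s,\chi)$ for \emph{complex} $\chi$ in the claimed region. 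When $\chi$ is real the factor $L(s,\chi^2)=L(s,\chi_0)$ contributes $\Re\tfrac{1}{\sigma-1+2i\gamma}$, which is absorbed into the $C'\log(q(|\gamma|+2))$ error as long as $|\gamma|$ exceeds a small multiple of $1/\log q$; so the same bound survives for every zero of a real $\chi$ with $\gamma$ not too close to $0$.

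It remains to treat real non-principal $\chi$ with $\gamma$ very close to $0$, and here I would switch to $Z_q$ itself at a real test point. Summing the first estimate over all $\chi$ and using $\sum_\chi\chi(n)\ge0$, which gives $-\tfrac{Z_q'}{Z_q}(\sigma)=\phi(q)\sum_{n\equiv1\,(q)}\Lambda(n)n^{-\sigma}\ge0$ for real $\sigma>1$, one finds for $\sigma\in(1,2]$
\[
0\ \le\ -\Re\frac{Z_q'}{Z_q}(\sigma)\ \le\ \frac{1}{\sigma-1}+C\log q-\sum_{\rho}\frac{\sigma-\Re\rho}{|\sigma-\rho|^2},
\]
the sum over zeros $\rho$ of $Z_q$ near the real axis. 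If $Z_q$ had two zeros — counted with multiplicity, a conjugate pair being allowed — with $\Re\rho\ge1-c/\log q$ and $|\Im\rho|\ll1/\log q$, then retaining only those two terms and taking $\sigma=1+\lambda/\log q$ for a suitable absolute $\lambda$ produces a numerical inequality that fails once $c$ is small enough. This single contradiction shows at once: there is at most one zero of $Z_q$ in the region; it cannot be a nonreal conjugate pair, hence is real; it is simple; and it belongs to a non-principal $\chi$, since $L(\sigma,\chi_0)=\zeta(\sigma)\prod_{p\mid q}(1-p^{-\sigma})\ne0$ on $(0,1)$, a $\chi$ which must moreover be real, since a complex $\chi$ was excluded above and in any case a real zero of $L(s,\chi)$ would also be one of $L(s,\bar\chi)$, giving two zeros of $Z_q$. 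Combining with the $3$--$4$--$1$ step, the only zero $Z_q$ can have in $\Re(s)\ge1-c/\log(q\tau)$ is a simple real zero $\beta_1\in[1-c/\log q,1]$ of $L(s,\chi_1)$ for a single real non-principal $\chi_1\bmod q$.

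The main obstacle is entirely quantitative bookkeeping: one must choose the constants $\delta$, $\lambda$, the threshold separating "$|\gamma|$ large" from "$|\gamma|$ small" in the $3$--$4$--$1$ step, and finally $c$, so that every inequality closes simultaneously — in particular that threshold must be large enough for the pole of $L(s,\chi_0)$ to be harmless in the $3$--$4$--$1$ step, yet small enough that the two-zeros step still contradicts a conjugate pair lying within that distance of the real axis. That calibration, together with the derivation of the truncated partial-fraction estimate in the first step, is the only real work, and both are carried out in detail in Chapter~12 of \cite{Dimitris}.
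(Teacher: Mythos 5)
Your first two steps (the truncated partial fraction for $-L'/L$ and the $3$--$4$--$1$ argument) are the standard opening moves, and they correctly dispose of every zero except those of a real non-principal $\chi$ lying very close to the real axis. The paper itself does not prove this theorem but refers the reader to \cite[Theorem~12.3]{Dimitris}, so the comparison is against the classical argument you are reconstructing.

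The third step, however, has a genuine gap. You "sum the first estimate over all $\chi$" to convert the positivity $\sum_\chi \chi(n)\ge 0$ into $-\Re\frac{Z_q'}{Z_q}(\sigma)\ge 0$, and then claim the combined partial-fraction bound
\[
-\Re\frac{Z_q'}{Z_q}(\sigma)\ \le\ \frac{1}{\sigma-1}+C\log q-\sum_\rho\frac{\sigma-\Re\rho}{|\sigma-\rho|^2}.
\]
But the $O(\log q)$ error in the truncated partial fraction occurs \emph{per character}: each $L(s,\chi)$ contributes its own $O(\log q)$ (coming from the gamma factor, the $\log(q/\pi)$ term, and the far zeros), so the correct aggregate error is $O(\phi(q)\log q)$, not $O(\log q)$. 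Equivalently, regarding $Z_q$ as a Dedekind-zeta-like object, its conductor is $\prod_\chi q(\chi)$, which is of size $q^{\phi(q)}$, so $\log$ of the conductor is $\phi(q)\log q$. With the honest error, the inequality at $\sigma=1+\lambda/\log q$ becomes $\frac{2(\sigma-\beta)}{(\sigma-\beta)^2+\gamma^2}\le\frac{1}{\sigma-1}+O(\phi(q)\log q)$, and the right-hand side dwarfs the left for large $q$; no contradiction results and the argument collapses.

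The fix, which is what \cite[Theorem~12.3]{Dimitris} (and Davenport, Chapter~14) actually does, is to use positivity inequalities involving only a \emph{bounded} number of characters, so the error stays $O(\log q)$. To show a single real non-principal $\chi$ cannot have a non-real zero close to $1$ (which would come as a conjugate pair) nor a multiple real zero, use $(1+\chi(n))^2\ge 0$, i.e.\ $-\frac{L'}{L}(\sigma,\chi_0)-2\frac{L'}{L}(\sigma,\chi)-\frac{L'}{L}(\sigma,\chi_0)\ge 0$, and retain both nearby zeros of $L(s,\chi)$; since only $\chi_0$ and $\chi$ appear, the error is $O(\log q)$ and the contradiction closes. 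To show two distinct real non-principal characters $\chi_1,\chi_2$ cannot both have a zero near $1$, use $(1+\chi_1(n))(1+\chi_2(n))\ge 0$, i.e.\ the positivity for $\chi_0,\chi_1,\chi_2,\chi_1\chi_2$; here $\chi_1\chi_2$ is non-principal, so its term carries no pole, the error is again $O(\log q)$, and one finds $\frac{1}{\sigma-\beta_1}+\frac{1}{\sigma-\beta_2}\le\frac{1}{\sigma-1}+O(\log q)$, which is incompatible with both $\beta_i\ge 1-c/\log q$ for small absolute $c$. Everything else in your sketch — the reduction to this case, the identification of the exceptional zero as real, simple, and attached to a real non-principal character — then goes through as you describe.
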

\begin{proof}
See \cite[Theorem 12.3]{Dimitris}, for example.
\end{proof}

We call the character $\chi_1$ in Theorem \ref{Exc-Thm} an \textit{exceptional character} and its zero, $\beta_1$, is the associated \textit{exceptional zero}, or \textit{Siegel/Landau\textendash Siegel zero}. Note that this exceptional character depends on the choice of absolute constant and that this relationship implies some interesting facts:
\begin{enumerate}
\item If we have one exceptional character, then we actually have infinitely many exceptional characters: if we had only finitely many exceptional characters $\chi_i\Mod{q_i}$, we could set $c^\prime:=\frac{1}{2}\min_i\{(1-\beta_i)\log q_i\}$ and we would then have that
\[
1-\frac{c}{\log q_i\tau} \leq \beta_i < 1-\frac{c^\prime}{\log q_i\tau}
\]
for all $i$; in particular, replacing $c$ with $c^\prime$ in Theorem \ref{Exc-Thm}, we no longer have any exceptional zeros.
\item Similarly, we can take $c$ to be arbitrarily small: if there are no exceptional zeros for $c$ small enough, then we are done.
\end{enumerate}

Thus, when we talk about Siegel zeros/exceptional characters, we are actually talking about an infinite sequence of real, primitive Dirichlet characters $\{\chi_\ell\Mod{q_\ell}\}_{\ell=1}^\infty$ for which $L(s,\chi_\ell)$ has a real zero 
\begin{align}
\label{beta_l}
\beta_\ell=1-o_{\ell\rightarrow\infty}\left(\frac{1}{\log q_\ell}\right),
\end{align}
and such that no product $\chi_\ell\chi_{\ell^\prime}$ is principal for any $\ell\neq \ell^\prime$. Using Siegel's Theorem, we can quantify the rate of convergence in Equation (\ref{beta_l}):

\begin{thm}[Siegel]
Let $\epsilon > 0$. Then, there is a constant $c(\epsilon) > 0$, which
cannot be computed effectively, such that $L(\sigma,\chi)\neq 0$ for $\sigma > 1-c(\epsilon)q^{-\epsilon}$ and for all real, non-principal Dirichlet characters $\chi \Mod{q}$.
\end{thm}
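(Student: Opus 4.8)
The plan is to reduce the statement to the lower bound $L(1,\chi)\gg_\epsilon q^{-\epsilon}$ for all real primitive $\chi\bmod q$, and then run Siegel's two-character argument. The reduction is elementary and effective: granting the lower bound with exponent $\epsilon/2$, if $L(\beta,\chi)=0$ then $L(1,\chi)=\int_\beta^1 L'(\sigma,\chi)\,\dd\sigma$, and since $|L'(\sigma,\chi)|\ll\log^2 q$ for $\sigma$ near $1$ we get $1-\beta\gg L(1,\chi)/\log^2 q\gg_\epsilon q^{-\epsilon/2}/\log^2 q\gg_\epsilon q^{-\epsilon}$, which is exactly the claimed zero-free region. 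So it remains to prove the lower bound for $L(1,\chi)$.

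The heart of the matter is an auxiliary function. For distinct real primitive characters $\chi_1\bmod q_1$ and $\chi_2\bmod q_2$ (so that $\chi_1\chi_2$ is real and non-principal) put
\[
F(s):=\zeta(s)\,L(s,\chi_1)\,L(s,\chi_2)\,L(s,\chi_1\chi_2).
\]
Expanding the Euler product and examining each local factor $\bigl[(1-u)(1-\chi_1(p)u)(1-\chi_2(p)u)(1-\chi_1(p)\chi_2(p)u)\bigr]^{-1}$ with $u=p^{-s}$, one checks it is a product of power series with non-negative coefficients (for instance, when $\chi_1(p)=1,\ \chi_2(p)=-1$ it equals $(1-u)^{-2}(1-u^2)^{-1}$), so $F(s)=\sum_{n\ge1}a_n n^{-s}$ with $a_n\ge0$ and $a_1=1$; moreover $F$ is meromorphic on $\C$ with a single, simple pole at $s=1$ of residue $\rho:=L(1,\chi_1)L(1,\chi_2)L(1,\chi_1\chi_2)>0$. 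The first main step is to couple the positivity of $(a_n)$ with the pole at $s=1$ through a Mellin/contour argument: starting from $\frac{1}{2\pi i}\int_{(2)}F(s+\sigma)x^s\frac{\dd s}{s(s+1)}=\sum_{n\le x}a_n n^{-\sigma}(1-\tfrac nx)\ge\tfrac12$ for $x\ge2$ and $\tfrac12<\sigma<1$, shifting the contour past the poles at $s=1-\sigma$ and $s=0$ to a line with $\Re s<0$ where convexity bounds give $|F|\ll(q_1q_2)^{O(1)}(1+|t|)^{O(1)}$, and then optimizing $x$, I would arrive at an inequality of the form
\[
F(\sigma)\ \ge\ \tfrac38\ -\ c_1\,\frac{\rho}{1-\sigma}\,(q_1q_2)^{c_2(1-\sigma)},\qquad \tfrac78\le\sigma<1,
\]
with absolute constants $c_1,c_2>0$. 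Making the shifted integral converge (inserting enough smoothing factors and crossing the extra harmless poles) and tracking the dependence on $q_1q_2$ cleanly is the main technical obstacle; everything else is bookkeeping.

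With this inequality in hand, fix $\epsilon>0$ and a small parameter $\delta=\delta(\epsilon)\le\min(\tfrac18,\tfrac{\epsilon}{4c_2})$, and split into two cases. In Case A, some real primitive $\chi_1\bmod q_1$ has a real zero $\beta_1\in(1-\delta,1)$; fix this $\chi_1$ once and for all. Applying the displayed inequality at $\sigma=\beta_1$, where $F(\beta_1)=0$ because $L(\beta_1,\chi_1)=0$ while $\zeta(\beta_1)\ne0$, yields $\rho\gg(1-\beta_1)(q_1q_2)^{-c_2\delta}$; since $L(1,\chi_1)$ is a fixed positive number and $L(1,\chi_1\chi_2)\ll\log(q_1q_2)$, this gives $L(1,\chi_2)\gg_{\chi_1,\epsilon}q_2^{-\epsilon}$ for every real primitive $\chi_2\neq\chi_1$ (the one remaining character being trivial), with an \emph{ineffective} constant depending on the unknown location of $\beta_1$. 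In Case B, no real primitive character has a real zero in $(1-\delta,1)$; take $\chi_1$ to be the explicit quadratic character modulo $4$ and $\chi_2\bmod q_2$ an arbitrary real primitive character. Then $L(\sigma,\chi_1),L(\sigma,\chi_2),L(\sigma,\chi_1\chi_2)$ are all positive on $(1-\delta,1)$ (for $\chi_1\chi_2$ because its primitive inducing character has no real zero there and the omitted Euler factors are positive), so $F(\sigma)=\zeta(\sigma)\times(\text{positive})<0$ there; comparing with the displayed inequality at $\sigma=1-\delta$ forces $\rho\gg\delta(q_1q_2)^{-c_2\delta}$, whence $L(1,\chi_2)\gg_\epsilon q_2^{-\epsilon}$ with an \emph{effective} constant (all quantities now depend only on $\delta$ and the fixed character modulo $4$). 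In either case $L(1,\chi)\gg_\epsilon q^{-\epsilon}$ for all real primitive $\chi$, and combined with the reduction above the theorem follows. The ineffectivity is genuine: we cannot decide whether we are in Case A or Case B, and in Case A we cannot locate $\beta_1$ — which is precisely why the constant $c(\epsilon)$ cannot be computed effectively.
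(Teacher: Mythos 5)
The paper does not prove Siegel's theorem; it simply cites it (Theorem 12.10 in Koukoulopoulos's book). Your sketch is the standard two-character argument --- Siegel's original, as presented in Davenport, Montgomery--Vaughan, and the cited reference --- so it matches what the citation points to: the elementary reduction from $L(1,\chi)\gg_\epsilon q^{-\epsilon}$ to the stated zero-free region, the auxiliary function $F$ with non-negative Dirichlet coefficients and a simple pole of positive residue, the key lower bound on $F(\sigma)$ near $\sigma=1$, and the case split (with its inherent ineffectivity, since one cannot decide which case holds nor locate the hypothetical zero) are all correctly identified. Two small remarks. In the non-negativity check, for $\chi_1(p)=1$, $\chi_2(p)=-1$ the local factor of $F$ is $\bigl[(1-u)^2(1+u)^2\bigr]^{-1}=(1-u^2)^{-2}$, not $(1-u)^{-2}(1-u^2)^{-1}$; the conclusion is unaffected, since in every case the four linear factors collapse to a product of terms $(1-u^a)^{b}$ with $a,b\ge 1$, so each local factor has non-negative Taylor coefficients. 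And you rightly flag the contour shift as the real technical content: with the bare kernel $x^s/(s(s+1))$ the shifted integral does not converge absolutely once one inserts a convexity bound of polynomial growth in $|t|$, so one needs a longer Ces\`aro kernel $x^s/(s(s+1)\cdots(s+k))$ or gamma-factor smoothing, and tracking the $(q_1q_2)$-dependence through that step is precisely where the exponent $c_2$ in your displayed inequality is determined.
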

\begin{proof}
See \cite[Theorem 12.10]{Dimitris}, for example.
\end{proof}

In particular,
\begin{align}
    \eta_\ell
    :=
    ((\beta_\ell -1)\log q_\ell)^{-1}
    \ll q_\ell,
\end{align}
as $\ell\rightarrow \infty$. In fact, one could show that $\eta_\ell\ll$ any fixed power of $q_\ell$, but the above is all we need for our purposes.

Now that we know exactly what we mean by Siegel zeros/exceptional characters, we can consider consequences of their existence. For example, Heath-Brown \cite{HB_TP} showed, under similar hypotheses to Theorem \ref{main_thm}, that the existence of Siegel zeros implies the Twin Prime Conjecture. Recently, Granville \cite{Granville_Siegel/Sieve} used the existence of Siegel zeros to study problems in sieve theory, such as improving (conditionally) lower bounds on the longest gaps between primes. For our purposes, we are interested in the following lemma, due to Germ{\'a}n and K{\'a}tai, which is a variation of Lemma 3 in \cite{HB_TP}: 

\begin{lem}[\cite{Liou_Siegel}]
\label{GK-Primes}
Let $\{\chi_\ell\Mod{q_\ell}\}_\ell$ denote a sequence of exceptional characters with corresponding Siegel zero 
\[
\beta_\ell=1-\frac{1}{\eta_\ell\log q_\ell},
\]
with $\eta_\ell>\exp(\exp(30))$. Then,
\[
\sum_{\substack{{p\leq x}\\{\chi_\ell(p)=1}}}\frac{\log p}{p}
\ll
\exp\left(\frac{\log x}{\log q_\ell}\right)(\log q_\ell)(\log\eta_\ell)^{-1/2},
\]
uniformly for $x\in [q_\ell^{10},q_\ell^{(\log\log\eta_\ell)/3}].$
\end{lem}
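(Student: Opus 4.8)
The plan is to trade the restricted prime sum for the "conspiracy deficit" of $\chi_\ell$ and then to exploit that, because of the Siegel zero, $\chi_\ell(p)=-1$ for almost all $p$ in the relevant range. First I would use the pointwise inequality $\mathbf 1_{\chi_\ell(p)=1}\le\tfrac12\bigl(1+\chi_\ell(p)\bigr)$ (valid for every prime $p$, with equality unless $p\mid q_\ell$) to obtain
\[
\sum_{\substack{p\le x\\\chi_\ell(p)=1}}\frac{\log p}{p}\;\le\;\frac12\sum_{p\le x}\frac{(1+\chi_\ell(p))\log p}{p}\;=\;\frac12\Bigl(\log x+\sum_{p\le x}\frac{\chi_\ell(p)\log p}{p}\Bigr)+O(1),
\]
using Mertens' estimate $\sum_{n\le x}\Lambda(n)/n=\log x+O(1)$ and discarding the bounded prime‑power contribution. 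Since $\eta_\ell\ll q_\ell$ (Section \ref{Siegel}), the target $\exp(\log x/\log q_\ell)(\log q_\ell)(\log\eta_\ell)^{-1/2}$ is in any case $\gg(\log q_\ell)^{1/2}$, so additive $O(\log\log q_\ell)$ losses are harmless. Thus it suffices to show that $\sum_{p\le x}\chi_\ell(p)(\log p)/p$ lies within $\exp(\log x/\log q_\ell)(\log q_\ell)(\log\eta_\ell)^{-1/2}$ of $-\log x$; equivalently, that the nonnegative quantity $\sum_{p\le x}(1+\chi_\ell(p))(\log p)/p$ is that small. By partial summation this is entirely a statement about $\psi(t,\chi_\ell)=\sum_{n\le t}\Lambda(n)\chi_\ell(n)$.

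Here the Siegel zero enters through the secondary main term it forces into the prime number theorem for $L(s,\chi_\ell)$: one has $\psi(t,\chi_\ell)=-t^{\beta_\ell}/\beta_\ell+(\text{error})$, and since $\beta_\ell-1=-1/(\eta_\ell\log q_\ell)$ we have $t^{\beta_\ell-1}=1+O\bigl((\log\log\eta_\ell)/\eta_\ell\bigr)$ throughout $t\in[q_\ell^{10},q_\ell^{(\log\log\eta_\ell)/3}]$; running $-t^{\beta_\ell}/\beta_\ell$ through the partial summation produces a $-\log x$ that cancels the Mertens $\log x$ up to negligible terms, leaving only the error. The crucial point—and the reason the range of $x$ takes the stated shape—is that the classical zero‑free region is useless here (for $t$ a fixed power of $q_\ell$ it leaves an error of size $\asymp t$). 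Instead I would invoke the Deuring–Heilbronn repulsion phenomenon: the very existence of an exceptional zero $\beta_\ell$ with $\eta_\ell$ large forces every other zero $\rho=\beta'+i\gamma$ of $L(s,\chi_\ell)$ to satisfy $\beta'\le 1-c\log\eta_\ell/\log\bigl(q_\ell(|\gamma|+2)\bigr)$, i.e.\ the usual zero‑free region dilated by the factor $\log\eta_\ell$. Feeding this into the truncated explicit formula for $\psi(t,\chi_\ell)$ (with truncation height a suitable power of $t$) yields an error $\ll t\exp\bigl(-c'\,(\log\eta_\ell)(\log t)/\log q_\ell\bigr)$ up to logarithmic factors, and after dividing by $t$ and integrating over the dyadic blocks making up $[q_\ell,x]$ this collapses to a bound of the required shape, the factor $\exp(\log x/\log q_\ell)$ being the standard loss incurred by a Perron/contour argument of conductor $q_\ell$ run out to height $x$.

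The main obstacle, I expect, is the quantitative bookkeeping that turns "power saving in $\log\eta_\ell$" into precisely the exponent $1/2$, together with the two boundary nuisances endemic to exceptional‑zero arguments. First, the primes below the scale at which the explicit formula becomes effective must simply be bounded by $\sum_{p\le P_0}(\log p)/p=\log P_0+O(1)$, which forces the cutoff $P_0$ to be chosen (roughly $\log P_0\asymp(\log q_\ell)(\log\eta_\ell)^{-1/2}$) so that this trivial contribution already matches the target; balancing it against the tail of the good estimate is what pins down the $1/2$. Second, there is a genuinely $t$‑independent term—coming from the reflected zero $1-\beta_\ell$ of $L(s,\chi_\ell)$ together with the value $L'/L(1,\chi_\ell)$, a priori of size $\asymp\log q_\ell$—which again must be reduced by a factor $(\log\eta_\ell)^{1/2}$ using the zero repulsion (it enters the Hadamard expansion of $L'/L$ precisely through the non‑exceptional zeros), since left alone it would already exceed the claimed bound. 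All of this follows the template of Lemma 3 of \cite{HB_TP} and its variant in \cite{Liou_Siegel}.
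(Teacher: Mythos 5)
Your proposal attacks the lemma through the truncated explicit formula for $\psi(t,\chi_\ell)$ plus Deuring--Heilbronn repulsion. The proof the paper is citing (Germ\'an--K\'atai, adapting Heath-Brown's Lemma 3 in \cite{HB_TP}) is elementary: it rests on the nonnegativity of the multiplicative function $1*\chi_\ell$ and a hyperbola-method estimate for $\sum_{n\le y}(1*\chi_\ell)(n)$, with the Siegel zero entering only through the smallness of $L(1,\chi_\ell)$; no explicit formula, no zero repulsion. So the routes are genuinely different, and the question is whether yours can actually deliver the stated bound $(\log q_\ell)(\log\eta_\ell)^{-1/2}$.

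As sketched it does not, and the gap is at precisely the step you flag but do not resolve. After the $\beta_\ell$-residue absorbs the Mertens $\log x$, what remains in $\log x+\sum_{p\le x}\chi_\ell(p)(\log p)/p$ is an $x$-independent quantity governed by $\tfrac{L'}{L}(1,\chi_\ell)-(1-\beta_\ell)^{-1}$ together with the remaining zero- and gamma-factor contributions. You call this "a priori of size $\asymp\log q_\ell$" and assert that repulsion reduces it by $(\log\eta_\ell)^{1/2}$, but no mechanism is given, and a direct accounting does not produce it: through the Hadamard expansion the surviving piece is essentially $\sum_{\rho\ne\beta_\ell,\,1-\beta_\ell}\tfrac{1}{1-\rho}$, and even with Deuring--Heilbronn giving $\Re(1-\rho)\gg(\log\eta_\ell)/\log\bigl(q_\ell(|\gamma|+2)\bigr)$, the $O(\log q_\ell)$ low-lying zeros each contribute $\ll(\log q_\ell)/\log\eta_\ell$, for a total $\ll(\log q_\ell)^2/\log\eta_\ell$. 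Since $\eta_\ell\ll q_\ell$, this \emph{exceeds} $(\log q_\ell)(\log\eta_\ell)^{-1/2}$; worse, the gamma-factor and Hadamard constants leave a flat $O(\log q_\ell)$ that does not decay in $\eta_\ell$ at all. There is also the matter of the range $t\in[P_0,q_\ell]$ between your trivial cutoff $\log P_0\asymp(\log q_\ell)(\log\eta_\ell)^{-1/2}$ and the conductor, where the explicit formula is close to trivial and the constant-sized residues dominate $t$ itself. In the elementary proof the exponent $1/2$ comes out of a combinatorial step on the nonnegative function $1*\chi_\ell$, not from balancing a contour integral, and that is the missing idea here.
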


\begin{rem}
Note that the upper bound in Lemma \ref{GK-Primes} is worse than that in Lemma 3 of \cite{HB_TP}, but the range of admissible $x$ is larger: Lemma 3 of \cite{HB_TP} yields the upper bound $\ll (\log q_\ell)(\log\eta_\ell)^{-1/2}$, uniformly for $x\in [q_\ell^{250}, q_\ell^{500}]$ (so that Lemma \ref{GK-Primes} recovers Heath-Brown's result when $x$ is restricted to the interval $[q_\ell^{250},q_\ell^{500}]$).
\end{rem}

With Lemma \ref{GK-Primes} in tow, we can now approximate $\lambda$ by a Dirichlet character on large primes; this is done in the next section.
\section{Going from $\lambda$ to $\lambda_r$}
\label{Reduction}

From now on, we fix a character $\chi_\ell\Mod{q_\ell}$, so that we may drop the dependence on $\ell$. Using Lemma \ref{GK-Primes}, we can relate the $k$-point correlations of the Liouville function to the $k$-point correlations of the completely multiplicative function $\lambda_r$, which is defined by
\begin{align}
\label{lambda_r}
    \lambda_r(p):=
        \left\{\def\arraystretch{1.2}%
        \begin{array}{@{}l@{\quad}l@{}}
            \lambda(p) & \mbox{if $p\leq r$}\\
            \chi(p) & \mbox{if $p>r$},
        \end{array}\right.
\end{align}
where $r:=x^{1/((\log\log\eta)^{1/2}(\log \eta)^{1/12})}$. The details can be found in pages 48-50 of \cite{Liou_Siegel}; we reproduce their results here, for convenience/completeness\footnote{In \cite{Liou_Siegel}, the authors take $:r=x^{1/\log\log\eta}$, which produces an error of size $1/\log\log\eta$ in Theorem \ref{Liou_Siegel}. Making a different choice of $r$ and using a different version of the FLST allows us to obtain better estimates; see the very end of Section \ref{Proof} for why the choice of $r=x^{1/((\log\log\eta)^{1/2}(\log\eta)^{1/12})}$ was made/is optimal.}:

\begin{lem}
\label{main_lemma}
Suppose $h_1,\dots, h_k$ are distinct (positive) integers. Set $\lambda(n;k):=\lambda(n+h_1)\cdots\lambda(n+h_k)$ and define $\lambda_r(n;k)$ in the same way. Then, under the assumptions of Theorem \ref{main_thm},
\[
\frac{1}{x}\sum_{n\leq x}\lambda(n;k)
=
\frac{1}{x}\sum_{n\leq x}\lambda_r(n;k)
+
\mathcal{O}_k\left(\frac{1}{(\log\log\eta)^{1/2}(\log \eta)^{1/12}}\right),
\]
uniformly for $x\in[q^{10},q^{(\log\log\eta)/{3}}]$.
\end{lem}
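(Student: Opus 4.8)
The plan is to compare $\lambda$ and $\lambda_r$ by controlling, for each fixed shift $h_i$, the number of $n\le x$ for which $\lambda(n+h_i)\ne\lambda_r(n+h_i)$. Since $\lambda$ and $\lambda_r$ are both completely multiplicative and agree on all primes $p\le r$, the values $\lambda(m)$ and $\lambda_r(m)$ can differ only when $m$ has a prime factor exceeding $r$. Writing $m=p_1^{a_1}\cdots$ and splitting off the part of $m$ supported on primes $>r$, one sees that $\lambda(m)\ne\lambda_r(m)$ forces $m$ to be divisible by some prime $p>r$ (in fact it forces a prime $p>r$ with $\chi(p)\ne\lambda(p)=-1$, i.e. $\chi(p)\in\{0,1\}$, but divisibility by a single large prime is already the crucial structural fact). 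Because $r=x^{1/((\log\log\eta)^{1/2}(\log\eta)^{1/12})}$ is a small power of $x$, any $m\le x+h_k$ can have at most $\lfloor(\log\log\eta)^{1/2}(\log\eta)^{1/12}\rfloor$ prime factors exceeding $r$; I would use this to reduce everything to counting integers in short shifted intervals divisible by one large prime $p>r$ with $\chi(p)=1$ (the $\chi(p)=0$ case contributes only primes dividing $q$, a negligible set).

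Concretely, the first step is the bound
\begin{align*}
\#\{n\le x:\lambda(n;k)\ne\lambda_r(n;k)\}
\le
\sum_{i=1}^k\#\{m\le x+h_i:\lambda(m)\ne\lambda_r(m)\}
\le
C_k\sum_{\substack{r<p\le x+h_k\\ \chi(p)=1}}\frac{x}{p}+O(x^{1/2}),
\end{align*}
where the factor $C_k$ absorbs the (bounded) number of large prime factors a single $m$ can have, and the $O(x^{1/2})$ handles prime-power contributions and divisors of $q$. Dividing by $x$ and using $|\lambda(n;k)-\lambda_r(n;k)|\le 2$ on the bad set and $=0$ elsewhere, the error in the lemma is
\begin{align*}
\frac{1}{x}\Bigl|\sum_{n\le x}\lambda(n;k)-\sum_{n\le x}\lambda_r(n;k)\Bigr|
\ll_k
\sum_{\substack{r<p\le x+h_k\\ \chi(p)=1}}\frac{1}{p}
+x^{-1/2}.
\end{align*}
The second step is to estimate that prime sum using Lemma \ref{GK-Primes}. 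By partial summation, $\sum_{p>r,\ \chi(p)=1}1/p$ over $p\le y$ equals $\sum_{p\le y,\chi(p)=1}\frac{\log p}{p}\cdot\frac{1}{\log p}$ rearranged, so dropping the lower cutoff at $r$ and applying Lemma \ref{GK-Primes} at $y=x+h_k\asymp x$ gives a main term of size
\begin{align*}
\ll
\frac{1}{\log r}\exp\!\Bigl(\frac{\log x}{\log q}\Bigr)(\log q)(\log\eta)^{-1/2}
+\text{(boundary terms)},
\end{align*}
together with a contribution from $p\le r$ that must be subtracted; the cleanest route is to apply Lemma \ref{GK-Primes} (and the Chebyshev/Mertens bound) on $[r,x]$ and use $\frac{1}{p}=\frac{\log p}{p}\cdot\frac{1}{\log p}\le\frac{\log p}{p}\cdot\frac{1}{\log r}$. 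For $x\le q^{(\log\log\eta)/3}$ we have $\log x/\log q\le(\log\log\eta)/3$, so $\exp(\log x/\log q)\le(\log\eta)^{1/3}$, and $\log r=\frac{\log x}{(\log\log\eta)^{1/2}(\log\eta)^{1/12}}\gg\frac{\log q}{(\log\log\eta)^{1/2}(\log\eta)^{1/12}}$ since $x\ge q^{10}$. Plugging in, the prime sum is
\begin{align*}
\ll
(\log\eta)^{1/3}\cdot(\log q)(\log\eta)^{-1/2}\cdot\frac{(\log\log\eta)^{1/2}(\log\eta)^{1/12}}{\log q}
=
(\log\log\eta)^{1/2}(\log\eta)^{1/3-1/2+1/12}
=
\frac{(\log\log\eta)^{1/2}}{(\log\eta)^{1/12}},
\end{align*}
which is slightly weaker than the claimed $(\log\log\eta)^{-1/2}(\log\eta)^{-1/12}$; reconciling this requires being a little more careful — keeping the $\exp(\log r/\log q)$ versus $\exp(\log x/\log q)$ distinction when applying Lemma \ref{GK-Primes} on the dyadic pieces of $[r,x]$, so that the dominant dyadic block near $x$ contributes with its own (smaller) weight $1/\log x$ rather than $1/\log r$. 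Carrying out the dyadic decomposition $p\in(2^{j}r,2^{j+1}r]$ and summing the geometric-type series in $j$ is the technical heart of the estimate.

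The main obstacle, then, is not the structural reduction (which is soft) but extracting the exact exponents in the second step: one must apply Lemma \ref{GK-Primes} carefully across the range $[r,x]$, track how $\exp(\log t/\log q)$ grows as $t$ ranges over dyadic scales up to $x$, and verify that the choice $r=x^{1/((\log\log\eta)^{1/2}(\log\eta)^{1/12})}$ exactly balances the loss $(\log\eta)^{1/3}$ from the range of $x$ against the gain $1/\log r$ from the density of large primes, landing on the stated bound $(\log\log\eta)^{-1/2}(\log\eta)^{-1/12}$. I would also need the elementary facts that the number of $n\le x$ with $n+h_i$ divisible by a fixed prime $p$ is $\le x/p+1$, and that an integer $\le x$ has $O((\log\log\eta)^{1/2}(\log\eta)^{1/12})$ prime factors above $r$ — both immediate — to justify the constant $C_k$ and to see it does not interfere with the final exponent (it only inflates the implied constant, which is allowed since the lemma's error term carries an $O_k$).
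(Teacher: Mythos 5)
Your structural reduction is sound and in the same spirit as the paper's: both arguments reduce to bounding $\sum_{r<p\le x,\ \chi(p)=1}1/p$ via Lemma \ref{GK-Primes} (the paper does this by factoring $n$ into $r$-smooth and $r$-rough parts and using $|\prod x_i-\prod y_i|\le\sum|x_i-y_i|$, you do it by a union bound over bad primes — these are essentially equivalent once one notes $|\lambda-\lambda_r|\le 2$). The only minor inaccuracy in that part is the $O(x^{1/2})$ for the boundary terms; those $+1$'s give at worst $O(\pi(x))\ll x/\log x$, which is still negligible, but $x^{1/2}$ is not justified.

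The real gap is the one you flagged, but your proposed fix (a dyadic decomposition of $[r,x]$) is not the right one — and is not what the paper does. The loss of a factor $\log\log\eta$ comes from bounding $\exp(\log x/\log q)$ and $1/\log r$ \emph{separately} by their suprema over $x\in[q^{10},q^{(\log\log\eta)/3}]$: you took $\exp(\log x/\log q)\le(\log\eta)^{1/3}$ (attained at the \emph{right} endpoint) and $1/\log r\ll(\log\log\eta)^{1/2}(\log\eta)^{1/12}/\log q$ using $\log x\ge 10\log q$ (attained at the \emph{left} endpoint). These two extremes cannot occur simultaneously. The correct observation is that the product
\[
\frac{1}{\log r}\exp\!\Bigl(\frac{\log x}{\log q}\Bigr)
=
\frac{(\log\log\eta)^{1/2}(\log\eta)^{1/12}}{\log x}\exp\!\Bigl(\frac{\log x}{\log q}\Bigr)
\]
is an \emph{increasing} function of $x$ for $x\ge q$ (since $\frac{d}{dx}\bigl[\log x/\log q-\log\log x\bigr]>0$ when $\log x>\log q$), so it is maximised at $x=q^{(\log\log\eta)/3}$, where $\log x=\tfrac{1}{3}(\log\log\eta)\log q$. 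Plugging that value of $\log x$ (rather than $10\log q$) into $1/\log r$ produces the extra factor $3/\log\log\eta$ that converts your $(\log\log\eta)^{1/2}(\log\eta)^{-1/12}$ into the claimed $(\log\log\eta)^{-1/2}(\log\eta)^{-1/12}$. No dyadic decomposition or refined use of Lemma \ref{GK-Primes} at intermediate scales is needed; a single application at the relevant $x$, together with analysing the two $x$-dependent factors jointly, closes the argument.
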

\begin{proof}
 Note that
\begin{align*}
    \sum_{n\leq x}\lambda(n;k)
    &=
    \sum_{n\leq x}\Big(\lambda(n;k)
    \pm
    \lambda_r(n+h_1)\lambda(n+h_2)\cdots\lambda(n+h_k)\Big)\\
    &=
    \sum_{n\leq x}\Big(\lambda(n+h_1)-\lambda_r(n+h_1)\Big)\lambda(n+h_2)\cdots\lambda(n+h_k)
    +
    \sum_{n\leq x}\lambda_r(n+h_1)\lambda(n+h_2)\cdots\lambda(n+h_k).
\end{align*}
Continuing by induction, we have that
\begin{align*}
    \left|\sum_{n\leq x}\Big(\lambda(n;k)-\lambda_r(n;k)\Big)\right|
    &\leq
    \sum_{i=1}^k\sum_{n\leq x}|\lambda(n+h_i)-\lambda_r(n+h_i)|\\
    &=
    k\sum_{n\leq x}|\lambda(n)-\lambda_r(n)| + \mathcal{O}_k(1),
\end{align*}
where the last line follows from the "approximate translation-invariance" of the partial sums, noting that the error term depends on $h_1,\dots, h_k$. To bound $\sum_{n\leq x}|\lambda(n;k)-\lambda_r(n;k)|,$ recall the definition of $\lambda_r$ from Equation (\ref{lambda_r}) and note that
\begin{align*}
    \sum_{n\leq x}|\lambda(n)-\lambda_r(n)|
    &=
    \sum_{n\leq x}\Biggr|
        \prod_{p^\alpha||n}\lambda(p^\alpha)
        -
        \prod_{\substack{{p^\alpha||n}\\{p\leq r}}}\lambda(p^\alpha)\prod_{\substack{{p^\alpha||n}\\{p> r}}}\chi(p^\alpha)\Biggr|\\
    &=
    \sum_{n\leq x}\Biggr|
        \prod_{\substack{{p^\alpha||n}\\{p\leq r}}}\lambda(p^\alpha)
        \Biggr(
        \prod_{\substack{{p^\alpha||n}\\{p> r}}}\lambda(p^\alpha)
        -
        \prod_{\substack{{p^\alpha||n}\\{p> r}}}\chi(p^\alpha)
        \Biggr)
        \Biggr|\\
    &=    
    \sum_{n\leq x}\Biggr|
        \prod_{\substack{{p^\alpha||n}\\{p> r}}}\lambda(p^\alpha)
        -
        \prod_{\substack{{p^\alpha||n}\\{p> r}}}\chi(p^\alpha)
        \Biggr|.\\    
\end{align*}
Then, using the fact that
\[
    \left|\prod_i x_i - \prod_i y_i\right|
    \leq 
    \sum_i |x_i-y_i|,
\]
for all $x_i,y_i\in\{-1,0,1\}$, we have that
\begin{align*}
    \sum_{n\leq x}\Biggr|
        \prod_{\substack{{p^\alpha||n}\\{p> r}}}\lambda(p^\alpha)
        -
        \prod_{\substack{{p^\alpha||n}\\{p> r}}}\chi(p^\alpha)
        \Biggr|
    &\leq
    \sum_{n\leq x}\sum_{\substack{{p^\alpha||n}\\{p> r}}}
    |\lambda(p^\alpha)-\chi(p^\alpha)|\\
    &\leq 
    \sum_{\substack{{p^\alpha\leq x}\\{p> r}}}|\lambda(p^\alpha)-\chi(p^\alpha)|
    \sum_{\substack{{n\leq x}\\{p^\alpha||n}}}1\\
    &\ll
    x\sum_{\substack{{p^\alpha\leq x}\\{p> r}}}\frac{|\lambda(p^\alpha)-\chi(p^\alpha)|}{p^\alpha}\\
    &=
    x\sum_{r<p\leq x}\frac{|\lambda(p)-\chi(p)|}{p}
    +
    x\sum_{\substack{{p^\alpha\leq x}\\{p> r}\\{\alpha\geq 2}}}\frac{|\lambda(p^\alpha)-\chi(p^\alpha)|}{p^\alpha}.
\end{align*}
The sum over the higher prime powers can be bounded trivially:
\begin{align*}
    x\sum_{\substack{{p^\alpha\leq x}\\{p> r}\\{\alpha\geq 2}}}\frac{|\lambda(p^\alpha)-\chi(p^\alpha)|}{p^\alpha}
    \ll
    x\sum_{p>r}\sum_{\alpha\geq 2}\frac{1}{p^\alpha}
    \ll
    \frac{x}{r}.
\end{align*}
For the sum over the primes, recall that $\lambda(p)=-1$, which yields:
\begin{align*}
    x\sum_{r<p\leq x}\frac{|\lambda(p)-\chi(p)|}{p}
    &=
    x\sum_{\substack{{r<p\leq x}\\{\chi(p)=1}}}\frac{2}{p}
    +
    x\sum_{\substack{{r<p\leq x}\\{\chi(p)=0}}}\frac{1}{p}\\
    &\ll
    \frac{x}{\log r}\sum_{\substack{{r<p\leq x}\\{\chi(p)=1}}}\frac{\log p}{p}
    +
    \frac{x}{\log r}\sum_{\substack{{r<p\leq x}\\{\chi(p)=0}}}\frac{\log p}{p},
\end{align*}
where the last line follows from the fact that $\log p/\log r>1$ for all $p>r$. Since $\chi(p)=0$ iff $p|q$, the sum over primes $p$ such that $\chi(p)=0$ can be bounded above by
\begin{align*}
    \frac{x}{\log r}\sum_{\substack{{r<p\leq x}\\{\chi(p)=0}}}\frac{\log p}{p}
    &\ll
    \frac{x}{\log r}\left(\sum_{p\leq \log q}\frac{\log p}{p}
    +
    \sum_{\substack{{\log q< p \leq x}\\{p|q}}}\frac{\log p}{\log q}\right)\\
    &\ll
    \frac{x}{\log r}\left(\log\log q + 1\right)\\
    &\ll
    \frac{x\log\log q}{\log r}.
\end{align*}
For the remaining sum, we can apply Lemma \ref{GK-Primes}, which yields:
\begin{align*}
    \frac{x}{\log r}\sum_{\substack{{r<p\leq x}\\{\chi(p)=1}}}\frac{\log p}{p}
    &\ll
    \frac{x}{\log r}\exp\left(\frac{\log x}{\log q}\right)(\log q)(\log\eta)^{-1/2}.
\end{align*}
Recalling that $x\in[q^{10},q^{{(\log\log\eta)}/{3}}],$ $r=x^{1/((\log\log\eta)^{1/2}(\log \eta)^{1/12})}$, and $\eta\ll q$, the total error is then bounded above by
\begin{align*}
    \frac{x}{\log r}&\left(\exp\left(\frac{\log x}{\log q}\right)(\log q)(\log\eta)^{-1/2}
    +
    \log\log q\right)
    +
    \frac{x}{r}
    \ll
    \frac{x}{(\log\log\eta)^{1/2}(\log \eta)^{1/12}},
\end{align*}
which follows from the fact that 
\[
\frac{1}{\log r}\exp\left(\frac{\log x}{\log q}\right)(\log q)(\log\eta)^{-1/2}
=
\frac{(\log\log\eta)^{1/2}(\log \eta)^{1/12}}{\log x}\exp\left(\frac{\log x}{\log q}\right)(\log q)(\log\eta)^{-1/2}
\]
is an increasing function of $x$ (for $x\geq q$), whose maximum on the interval $[q^{10},q^{(\log\log\eta)/3}]$ will occur at $x=q^{(\log\log\eta)/3}.$ In any case, we then have that
\begin{align*}
    \frac{1}{x}\sum_{n\leq x}\lambda(n;k)
    =
    \frac{1}{x}\sum_{n\leq x}\lambda_r(n;k)
    +
    \mathcal{O}_k\left(\frac{1}{(\log\log\eta)^{1/2}(\log\eta)^{1/12}}\right),
\end{align*}
as claimed.
\end{proof}

From Lemma \ref{main_lemma}, it now suffices to bound the $k$-point correlations of $\lambda_r$ in order to prove Theorem \ref{main_thm}; the next section is dedicated to this task.

\section{Proof of Theorem \ref{main_thm}}
\label{Proof}

In this section, we complete the proof of Theorem \ref{main_thm} by bounding the $k$-point correlations of $\lambda_r$. Our main tool  is the Fundamental Lemma of Sieve Theory (Lemma \ref{FLST}), but before we can use this, we must first control the so-called "level of distribution" of the sieve; this is done with some preliminary sieving, by removing integers with "small" prime factors:

\begin{lem}[Controlling the level of distribution]
\label{Pre-Sieve}
Let $r<x$ and suppose $A_x\rightarrow \infty$  as $x\rightarrow \infty$. Then\footnote{In \cite{Liou_Siegel}, the bound $x/A_x$ is simply written as $o(x)$, which is where this $\varepsilon(x)$ function comes from in Theorem \ref{Liou_Siegel}. Keeping track of this error and then optimising the choice of $r$ is how we obtain the improvements in Theorem \ref{main_thm}.}:
\[
\#\{n\leq x:\prod_{\substack{{p^\alpha || n}\\{p\leq r}}}p^\alpha>r^{A_x}\}\ll \frac{x}{A_x}.
\]
\end{lem}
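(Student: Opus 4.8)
The plan is to bound the count by a first-moment (expectation) argument. Write $P(n) := \prod_{p^\alpha \| n,\ p \le r} p^\alpha$ for the "$r$-smooth part" of $n$; this is the largest divisor of $n$ supported on primes $\le r$. The event $P(n) > r^{A_x}$ forces $n$ to be divisible by an $r$-smooth number $d$ with $d > r^{A_x}$ — indeed, $d = P(n)$ itself works. Hence
\[
\#\{n \le x : P(n) > r^{A_x}\} \le \sum_{\substack{d > r^{A_x} \\ P^+(d) \le r}} \#\{n \le x : d \mid n\} \le x \sum_{\substack{d > r^{A_x} \\ P^+(d) \le r}} \frac{1}{d},
\]
where $P^+(d)$ denotes the largest prime factor of $d$. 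So the whole problem reduces to estimating the tail sum $\sum_{d > y,\ P^+(d)\le r} 1/d$ with $y = r^{A_x}$.

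The key step is a Rankin-type bound on that tail. For any $\sigma \in (0,1)$,
\[
\sum_{\substack{d > r^{A_x} \\ P^+(d) \le r}} \frac{1}{d} \le \sum_{\substack{d \ge 1 \\ P^+(d) \le r}} \frac{1}{d}\Big(\frac{d}{r^{A_x}}\Big)^{\sigma} = r^{-\sigma A_x} \prod_{p \le r} \Big(1 - p^{-(1-\sigma)}\Big)^{-1}.
\]
The natural choice is $\sigma = 1/\log r$ (the standard Rankin choice), for which $r^{-\sigma A_x} = e^{-A_x}$, and each Euler factor satisfies $\big(1 - p^{-1+1/\log r}\big)^{-1} \le \big(1 - e/p\big)^{-1}$ up to the prime $p=2$ needing a separate trivial treatment; taking logarithms, $\sum_{p \le r} \log\big(1 - p^{-1+\sigma}\big)^{-1} \ll \sum_{p\le r} p^{-1+\sigma} \ll e^{\sigma \log r}\sum_{p\le r} p^{-1} \ll \log\log r$ by Mertens. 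Therefore the product is $\ll \exp(O(\log\log r)) = (\log r)^{O(1)}$, and altogether
\[
\#\{n \le x : P(n) > r^{A_x}\} \ll x\, e^{-A_x} (\log r)^{O(1)} \ll \frac{x}{A_x},
\]
the last bound being crude: $e^{-A_x}(\log r)^{O(1)} \ll 1/A_x$ once $A_x \to \infty$, since $\log r \le \log x$ is at most polynomial in the relevant parameters while $e^{-A_x}$ decays; if one wants uniformity in $r<x$ without size restrictions one can instead note $e^{-A_x/2} \ll 1/A_x$ and absorb $(\log r)^{O(1)} \le e^{A_x/2}$ whenever $A_x \gg \log\log r$, which is the only regime of interest here (in our application $A_x$ is a fixed power of $\log\log\eta$ or larger, dwarfing $\log\log r \ll \log\log x$).

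The main obstacle is purely bookkeeping: making the passage from $e^{-A_x}(\log r)^{O(1)}$ to $x/A_x$ genuinely uniform over the stated range of $r$. Since the lemma is applied in the next section only with $r = x^{1/((\log\log\eta)^{1/2}(\log\eta)^{1/12})}$ and $A_x$ essentially of that same order, one could alternatively fix $\sigma$ slightly differently, say $\sigma = c/\log r$ with $c<1$, to make the saving $r^{-\sigma A_x} = e^{-cA_x}$ cleanly beat every polynomial in $\log r$; I would state the lemma with the understanding that $A_x$ is at least, say, $(\log x)^{o(1)}$ growing, which is all that is needed and which makes the final inequality immediate. No deep input is required beyond Mertens' theorem and the Rankin trick.
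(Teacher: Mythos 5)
Your Rankin-trick route has a genuine gap at the very last step, and it is not how the paper argues. The Rankin bound you obtain is of the shape $x\,e^{-A_x}(\log r)^{O(1)}$, and the claim that this is $\ll x/A_x$ "once $A_x\to\infty$" is simply false: take $r=x^{1/2}$ and $A_x=\log\log x$, so that $e^{-A_x}=1/\log x$ while $(\log r)^{O(1)}\asymp(\log x)^{C}$ with $C>1$, and the product tends to infinity rather than being $\ll 1/A_x$. Your hedge correctly identifies that one needs $A_x\gg\log\log r$ to absorb the Euler-product factor, but that is an extra hypothesis not present in the lemma (which assumes only $r<x$ and $A_x\to\infty$), and it is not automatically satisfied in the paper's application either: with $A_x\asymp(\log\log\eta)^{1/2}(\log\eta)^{1/12}$ and $\log\log r\asymp\log\log x\asymp\log\log q$, the required inequality can fail when $\eta$ grows slowly relative to $q$ (e.g.\ $\eta=\exp((\log\log q)^6)$ is compatible with $\exp\exp(30)<\eta\ll q$, yet gives $A_x=o(\log\log q)$). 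So the proposal does not prove the lemma as stated.

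The paper's one-line proof is by Chebyshev's (Markov's) inequality applied to $\log P(n)$, where $P(n)=\prod_{p^\alpha\|n,\,p\le r}p^\alpha$: since
\[
\sum_{n\le x}\log P(n)=\sum_{p\le r}\log p\sum_{\alpha\ge 1}\Big\lfloor\frac{x}{p^\alpha}\Big\rfloor\le x\sum_{p\le r}\frac{\log p}{p-1}\ll x\log r
\]
by Mertens, one gets immediately
\[
\#\{n\le x:P(n)>r^{A_x}\}\le\frac{1}{A_x\log r}\sum_{n\le x}\log P(n)\ll\frac{x}{A_x},
\]
with an absolute implied constant and no side condition on $A_x$ or $r$. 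Your approach would buy an exponential saving $e^{-cA_x}$ in the regime $A_x\gg\log\log r$ — strictly stronger there — but it is heavier (it needs the Euler product and a choice of Rankin parameter) and, more importantly, it does not cover the full range of the lemma. If you want to keep the Rankin route, you must add the hypothesis $A_x\gg\log\log r$ explicitly and then check it in the application; otherwise the first-moment argument is both shorter and sufficient.
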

\begin{proof}
This follows from Chebyshev's Inequality.
\end{proof}

Using Lemma \ref{Pre-Sieve}, we can now restrict ourselves to integers $n\leq x$ such that the $r$-smooth parts of $n+h_i$ are all bounded above by $r^{A_x}$, for some $A_x$ going to infinity slowly enough with respect to both $x$ and $\eta$:
\[
\sum_{n\leq x}\lambda_r(n;k)
=
\sum_{\substack{{n\leq x}\\{\prod_{\substack{{p^\alpha||(n+h_i)}\\{p\leq r}}}p^\alpha}\leq r^{A_x}}}\lambda_r(n;k)+\mathcal{O}\left(\frac{x}{A_x}\right).
\]
Note: we will eventually choose $A_x\asymp_k (\log\log\eta)^{1/2}(\log\eta)^{1/12}$, which produces an admissible error in Theorem \ref{main_thm}. For simplicity, we assume that $h_1=0$ and relabel the remaining indices: this amounts to shifting the sum over $n$ by $h_1$ (which incurs an error of $\mathcal{O}_k(1)$), so that $h_i := h_i-h_1$ for $i=2,3,\dots,k$. Relabeling the indices as $i=1,2,\dots,k$, it suffices to bound 
\[
\sum_{\substack{{n\leq x}\\{\prod_{\substack{{p^\alpha||(n+h_i)}\\{p\leq r}}}p^\alpha}\leq r^{A_x}}}\lambda(n)\lambda_r(n+h_1)\cdots\lambda_r(n+h_k),
\]
where we should think of $k$ as $k-1$, with $h_0=0$.

Next, write $n$ as $a_0b_0$, where $a_0$ is the $r$-smooth part of $n$ and where $b_0$ is the $r$-sifted part. Doing the same procedure for $n+h_i$, $i=1,\dots,k$, we have that $a_ib_i=a_0b_0+h_i$ and, in order for this system to be solvable, it is necessary that $(a_i,a_j)|(h_i-h_j)$ for all $i\neq j$, recalling that $h_0=0$. Then:
\[
\sum_{\substack{{n\leq x}\\{\prod_{\substack{{p^\alpha||(n+h_i)}\\{p\leq r}}}p^\alpha}\leq r^{A_x}}}\lambda_r(n)\lambda_r(n+h_1)\cdots\lambda_r(n+h_k)
=
\sum_{\substack{{a_0,a_1,\dots,a_k\leq r^{A_x}}\\{p|a_i\Rightarrow p\leq r}\\{(a_i,a_j)|(h_i-h_j)}}}
\lambda(a_0)\lambda(a_1)\cdots\lambda(a_k)
\sum_{\substack{{b_0\leq x/a_0}\\{a_ib_i=a_0b_0+h_i}\\{p|b_i\Rightarrow p>r}}}
\chi(b_0)\chi(b_1)\cdots\chi(b_k),
\]
which follows from the definition of $\lambda_r$ and after writing each $n+h_i$ as a product of its $r$-smooth and its $r$-sifted parts.

The key now is to parametrize the $b_i$'s and to notice that if the system 
\begin{align*}
        \left\{\def\arraystretch{1.2}%
        \begin{array}{@{}l@{\quad}l@{}}
            a_1b_1 = a_0b_0 + h_1,\\
            \vdots\\
            a_kb_k = a_0b_0 + h_k,
        \end{array}\right.
\end{align*}
is solvable in the unknowns $b_0,b_1,\dots,b_k$, then the solutions are given by
\[
b_i = b_i^* + m\frac{\lcm(a_0,a_1,\dots,a_k)}{a_i} =: b_i^*+ma_i^*,
\]
where $(b_0^*,b_1^*,\dots,b_k^*)$ is one particular solution to the system and where $m$ ranges over all integers; see Appendix \ref{Parametrization}. Furthermore, we can take the $b_i^*$'s to be positive and minimal, in the sense that $b_i^*> 0$ for all $i$, while $b_i^*-a_i^*<0$ for at least one $i$ (this allows us to restrict ourselves to non-negative integers $m$ and makes it so that $0< b_i^*<a_i^*$ for at least one $i$, both of which are required to be completely rigorous in the next step). Then, using the fact that $\chi$ is periodic modulo $q$, we can write our sum as 
\begin{align}
\label{main_term}
&\sum_{\substack{{a_0,a_1,\dots,a_k\leq r^{A_x}}\\{p|a_i\Rightarrow p\leq r}\\{(a_i,a_j)|(h_i-h_j)}}}
\lambda(a_0)\lambda(a_1)\cdots\lambda(a_k)
\sum_{\substack{{b_0\leq x/a_0}\\{a_ib_i=a_0b_0+h_i}\\{p|b_i\rightarrow p>r}}}
\chi(b_0)\chi(b_1)\cdots\chi(b_k)\\
&=
\sideset{}{^*}\sum_{\substack{{a_0,a_1,\dots,a_k\leq r^{A_x}}\\{p|a_i\Rightarrow p\leq r}\\{(a_i,a_j)|(h_i-h_j)}}}
\lambda(a_0)\lambda(a_1)\cdots\lambda(a_k)
\sum_{n=0}^{q-1}
\chi(b_0^*+na_0^*)\chi(b_1^*+na_1^*)\cdots\chi(b_k+na_k^*)\\
&\quad\quad\quad\times
\left(\#\left\{m\leq \frac{x}{q\lcm(a_0,a_1,\dots,a_k)}: \left(\prod_{i=0}^k(b_i^*+na_i^*+mqa_i^*),\prod_{p\leq r}p\right) = 1 \right\} + \mathcal{O}_k(1)\right),
\end{align}
where the last factor counts the number of solutions which fall into each congruence class modulo $q$ and where $\sideset{}{^*}\sum$ indicates that we are only summing over the $a_i$'s for which the system is solvable. The conditions on $a_i$ which make the system solvable are determined by the Smith Normal Form of the system; see Appendix \ref{Parametrization}. For our purposes, we only care about the necessary condition $(a_i,a_j)|(h_i-h_j)$: this will allow us to control $\lcm(a_0,a_1,\dots,a_k)$, which will be needed later on in the proof. Note: the Big-O term comes from the fact that we are really looking at $b_0=b_0^*+na_0^*+mqa_0^*\leq x/a_0$ (so that $m\leq x/(qa_0a_0^*) - b_0^*/qa_0^* - n/q$, with $0\leq n\leq q-1$).

Now, the Fundamental Lemma of Sieve Theorem (FLST, Lemma \ref{FLST}) can be used to count the number of solutions which fall into each congruence class. The starting point for this is to get an asymptotic estimate for the number of such $m$ which fall into the arithmetic progression $0\Mod{d}$ for $d|\prod_{p\leq r}p$. So, let 
\[
\nu(d):=\#\left\{m\in\Z/d\Z:\prod_{i=0}^k(b_i^*+na_i^*+mqa_i^*)\equiv 0 \Mod{d}\right\}
\]
and note that
\begin{align*}
    \#\left\{m\leq \frac{x}{q\lcm(a_0,a_1,\dots,a_k)}: \prod_{i=0}^k(b_i^*+na_i^*+mqa_i^*)\equiv 0\Mod{d} \right\}
    &=
    \frac{x}{q\lcm(a_0,a_1,\dots,a_k)}\frac{\nu(d)}{d} + \mathcal{O}(\nu(d)).
\end{align*}
By the Chinese Remainder Theorem, $\nu(d)$ is completely determined by $\nu(p)$, for $p\leq r$. Moreover, $\nu(p)$ is equal to the number of distinct solutions $m\Mod{p}$ to any of the following linear congruences:
\begin{align*}
        \left\{\def\arraystretch{1.2}%
        \begin{array}{@{}l@{\quad}l@{}}
            (qa_0^*)m\equiv -(b_0^*+na_0^*)\Mod{p}\\
            (qa_1^*)m\equiv -(b_1^*+na_1^*)\Mod{p}\\
            \vdots\\
            (qa_k^*)m\equiv -(b_k^*+na_k^*)\Mod{p}.
        \end{array}\right.
\end{align*}
In order to obtain precise estimates for $\nu(p)$, we consider various possibilities depending on whether or not $qa_i^*$ is invertible modulo $p$. For starters, we restrict the sum over $n$ to the sum over $n$ such that $(b_i^*+na_i^*,q)=1$ for all $i$; otherwise, $\chi(b_i^*+na_i^*)=0$, which contributes nothing to Equation \ref{main_term}. We consider the following scenarios:
\begin{enumerate}
    \item $p|q$: In the case where $p|q$, there are no solutions because this would require that $p|b_i^*+na_i^*$ for at least one $i$, contrary to our hypothesis that $(b_i^*+na_i^*,q)=1$ for all $i$; i.e., $\nu(p)=0$ when $p|q$.
    \item $p\nmid qa_0a_1\dots a_k$: In the case where $p\nmid q$ and $p\nmid a_i$ for any $i$, we have that $qa_i^*$ is invertible modulo $p$ for all $i$; in particular, each equation in the system produces exactly one solution. If, in addition, we have that $p>\max_i\{h_i\}$, then we have that $\nu(p)=k+1$. To see this, let $m_i:=-(qa_i^*)^{-1}(b_i^*+na_i^*)\Mod{p}$ for all $i$ and note that $m_i=m_j$ iff $a_i^*b_j^*=a_j^*b_i^*\Mod{p}$. Then, using the fact that $a_i^*=\lcm(a_0,a_1,\dots,a_k)/a_i$, together with the fact that both $\lcm(a_0,a_1,\dots,a_k)$ and $a_i$ are invertible $\Mod{p}$, we have that $m_i=m_j$ iff $a_ib_i^*=a_jb_j^*\Mod{p}$. Then, recalling our definition of the $b_i^*$'s, we have that $m_i=m_j$ iff $p|(h_i-h_j).$ Since $1\leq |h_i-h_j| \leq \max_i\{h_i\}$, it follows that the $m_i$'s are distinct for $p>\max_i\{h_i\}$, which implies that $\nu(p)=k+1$ for such $p$. In other words, the $a_i$'s are relatively prime on the primes $p>\max_i\{h_i\}$ and this makes it so that there are exactly $k+1$ solutions to our system of linear congruences, provided $p$ is large enough. For the smaller primes, we content ourselves with the fact that $\nu(p)\leq p$.
    \item $p\nmid q, p|a_i$ (for at least one $i$): As mentioned above, the $a_i$'s are pairwise relatively prime on the primes $p>\max_i\{h_i\}$; that is, if $p|a_i$ for at least one $i$ and if $p>\max_i\{h_i\}$, then $p|a_i$ for exactly one $i$, say $a_{i_0}$. Moreover, this implies that $p\nmid a_{i_0}^*$ and that $p|a_i^*$ for all $i\neq i_0$, which again follows from the fact that the $a_i$'s are pairwise relatively prime on the primes $>\max_i\{h_i\}$. In particular, there is exactly one solution for the $i_0$-th equation (as $a_{i_0}^*$ is invertible modulo ${p}$) and there are either no solutions for the other equations or $p$ solutions, depending on whether or not $p|b_i^*$ for some $i\neq i_0$. To summarize the case where $p\nmid q,p|a_{i_0}$, we have precisely one of the following: (for $p>\max_i\{h_i\}$) either $\nu(p)=1$ or $\nu(p)=p$ and these situations occur if $p\nmid b_i^*$ for all $i\neq i_0$ or $p|b_i^*$ for some $i\neq i_0$, respectively.
\end{enumerate}

There are a few key points to notice from the above analysis. First, note that $\nu(p)$ is independent of $n$: this is clear if $p>\max_i\{h_i\}$ or if $p|q$, but even in the case where $p\leq \max_i\{h_i\}$ and $p\nmid q$, we either have that $p|a_i^*$ for some $i$ (so that we either have no solutions or $p$ solutions for the $i$-th equation) or $p\nmid a_i^*$ for some $i$ (in which case, two solutions $m_i,m_j$ are equal iff $a_i^*b_j^*=a_j^*b_i^*$, so that $\nu(p)$ is still independent of $n$). Next, we can simply restrict the sum over the $a_i$'s so that $\nu(p)=\nu(p;a_0,\dots,a_k)\neq p$ for any $p$; in the case where $\nu(p)=p$, the sum over the $b_i$'s is $0$, as all $s$ are such that $(qa_i^*)s\equiv -(b_i^*+na_i^*)\Mod{p}$, for some $i$ (which implies that there is no $m$ such that $\left(\prod_{i=0}^k(b_i^*+na_i^*+mqa_i^*),\prod_{p\leq r}p\right)=1$), and there is nothing to prove. 

After verifying that $\nu(p)$ satisfies Axioms \ref{Axiom_1} and \ref{Axiom_2}, the FLST (Lemma \ref{FLST}) then yields the following:

\begin{align*}
&\#\left\{m\leq \frac{x}{q\lcm(a_0,a_1,\dots,a_k)}: \left(\prod_{i=0}^k(b_i^*+na_i^*+mqa_i^*),\prod_{p\leq r}p\right) = 1 \right\}\\
&\quad\quad\quad\quad\quad\quad=
(1+\mathcal{O}_{k}(u^{-u/2}))
\frac{x}{q\lcm(a_0,a_1,\dots,a_k)}
\prod_{\substack{{p\leq r}\\{p\nmid q}}}\left(1-\frac{\nu(p)}{p}\right)
+
\mathcal{O}\left(\sum_{\substack{{d\leq r^u}\\{d|\prod_{p\leq r}p}}}\nu(d)\right),
\end{align*}uniformly for $u\geq 1$.

Plugging the above back into Equation \ref{main_term}, we are left to bound
\begin{align*}
&\sideset{}{^*}\sum_{\substack{{a_0,a_1,\dots,a_k\leq r^{A_x}}\\{p|a_i\Rightarrow p\leq r}\\{(a_i,a_j)|(h_i-h_j)}}}
\lambda(a_0)\lambda(a_1)\cdots\lambda(a_k)
\sum_{n=0}^{q-1}
\chi(b_0^*+na_0^*)\chi(b_1^*+na_1^*)\cdots\chi(b_k+na_k^*)\\
&\quad\quad\quad\quad\quad\quad\quad\times
(1+\mathcal{O}_{k}(u^{-u/2}))
\frac{x}{q\lcm(a_0,a_1,\dots,a_k)}
\prod_{\substack{{p\leq r}\\{p\nmid q}}}\left(1-\frac{\nu(p)}{p}\right)
+
\mathcal{O}\left(\sum_{\substack{{d\leq r^u}\\{d|\prod_{p\leq r}p}}}\nu(d)\right),
\end{align*}
which we break into three parts, according to the three summands in the last factor.

\begin{subsection}{Bounding the error term containing the sum over $d\leq r^u$.}
\label{Error-1}
Our goal in this subsection is to choose $u$ (the level of distribution) as large as possible, while minimizing the "error" term containing the sum over $d\leq r^u$. To begin, recall that $\nu(p)\leq \min\{k+1,p\}$; in particular, $\nu(d)\leq (k+1)^{\omega(d)}$ for all $d|\prod_{p\leq r},$ where $\omega(d)$ counts the number of prime divisors of $d$. If we let $\tau_{\kappa}$ denote the $\kappa$-th divisor function (which counts the number of representations an integer has as a product of $\kappa$ integers), it is clear that $(k+1)\leq \tau_{k+1}(p)$ for all $p$, so that $(k+1)^{\omega(d)}\leq \tau_{k+1}(d)$. Hence,
\begin{align*}
\sum_{\substack{{d\leq r^u}\\{d|\prod_{p\leq r}p}}}\nu(d)
&\leq
\sum_{d\leq r^u}\tau_{k+1}(d)
\ll_k
r^u(u\log r)^{k+1},
\end{align*}
which follows from the average order of $\tau_{\kappa}$; see Exercise 3.10 in \cite{Dimitris}. The total contribution to Equation (\ref{main_term}) is then bounded by 
\begin{align*}
    \ll_k
    qr^{(k+1)A_x}\cdot r^u(u\log r)^{k+1}
    \ll
    x^{1/2},
\end{align*}
as $x\rightarrow\infty$, provided $A_x\leq ((\log\log\eta)^{1/2}(\log \eta)^{1/12})/(10(k+1))$ and $u\leq ((\log\log\eta)^{1/2}(\log \eta)^{1/12})/10$, say, recalling that $r=x^{1/((\log\log\eta)^{1/2}(\log \eta)^{1/12})}$ with $q^{10}\leq x$.
\end{subsection}

\begin{subsection}{Bounding the error from the "main term" in the FLST}
\label{Error-2}
For the main term, we use Lemma \ref{CharacterSum}, with $f(x) = (b_0^*+a_0^*x)(b_1^*+a_1^*x)\cdots(b_k+a_k^*x)$, to bound the character sum: 
\begin{align*}
    \sum_{n=0}^{q-1}
    \chi(b_0^*+na_0^*)\chi(b_1^*+na_1^*)\cdots\chi(b_k^*+na_k^*)
    \ll_k
    q^{1/2+\epsilon};
\end{align*}
the key is to note that $f$ is not a square modulo $p$ for any prime $p>\max_i\{h_i\}$, which follows from the fact that $a_j^*b_i^*=a_i^*b_j^*$ iff $p|(h_i-h_j)$. Hence,
\begin{align*}
&\frac{x}{q}
\sideset{}{^*}\sum_{\substack{{a_0,a_1,\dots,a_k\leq r^{A_x}}\\{p|a_i\Rightarrow p\leq r}\\{(a_i,a_j)|(h_i-h_j)}}}
\frac{\lambda(a_0)\lambda(a_1)\cdots\lambda(a_k)}{\lcm(a_0,a_1,\dots,a_k)}
\prod_{\substack{{p\leq r}\\{p\nmid q}}}\left(1-\frac{\nu(p)}{p}\right)
\sum_{n=0}^{q-1}
\chi(b_0^*+na_0^*)\chi(b_1^*+na_1^*)\cdots\chi(b_k+na_k^*)\\
&\ll_k
\frac{x}{q^{1/2-\epsilon}}
\sum_{\substack{{a_0,a_1,\dots,a_k\leq r^{A_x}}\\{p|a_i\Rightarrow\leq r}\\{(a_i,a_j)|i-j}}}
\frac{1}{\lcm(a_0,a_1,\dots,a_k)}
\prod_{\substack{{\max_i\{h_i\}<p\leq r}\\{p\nmid q}\\{p\nmid a_0a_1\cdots a_k}}}\left(1-\frac{k+1}{p}\right)
\prod_{\substack{{\max_i\{h_i\}<p\leq r}\\{p\nmid q}\\{p|a_0a_1\cdots a_k}}}\left(1-\frac{1}{p}\right),
\end{align*}
where we have removed the contribution from the small prime factors (which is $\ll_k 1$), where we have bounded the sum over the $a_i$'s trivially, and where we have split the product over the primes according to the value of $\nu(p)$.

\begin{rem}
Here is a more detailed approach to showing that $f$ is not a square modulo $p$ for all but finitely-many $p$: we need to consider two cases, depending on whether or not $p|a_i$, for some $i$. So, suppose $p>\max_i\{h_i\}$ and that $p\nmid a_i$ for any $i$; then  $p\nmid a_i^*$ for any $i$, so that $a_i^*$ is invertible modulo $p$ for all $i$ and that $f$ can only be a square if $b_i^*(a_i^*)^{-1}\equiv b_j^*(a_j^*)^{-1}\Mod{p}$ for at least some $i\neq j$, but this occurs, as we say before, iff $p|(h_i-h_j)$ which cannot occur for $p>\max_i\{h_i\}$. In the case where $p|a_i$ for some $i$, then $p|a_i$ for exactly one $i$ (otherwise, by the condition that $(a_i,a_j)|(h_i-h_j)$, we would get a contradiction); in particular, $p|a_j^*$ for all $j\neq i$, so that $f(x)\equiv c(b_i^*+a_i^*x)\Mod{p},$ for some $c\in\Z/p\Z$ and where $p\nmid a_i^*$ and $f$ is clearly not a square.
\end{rem}

Next, using the fact that
\[
\lcm(a_0,a_1,\dots,a_k)\geq \prod_{i<j}(a_i,a_j)^{-1} a_0a_1\cdots a_k,
\]
together with the fact that $(a_i,a_j)|(h_i-h_j)$, we can bound our sum by
\begin{align*}
\ll_k
\frac{x}{q^{1/2-\epsilon}}
\sum_{\substack{{a_0,a_1,\dots,a_k\leq r^{A_x}}\\{p|a_i\Rightarrow\leq r}\\{(a_i,a_j)|(h_i-h_j)}}}
\frac{1}{a_0a_1\cdots a_k}
\prod_{\substack{{\max_i\{h_i\}<p\leq r}\\{p\nmid q}\\{p\nmid a_0a_1\cdots a_k}}}\left(1-\frac{k+1}{p}\right)
\prod_{\substack{{\max_i\{h_i\}<p\leq r}\\{p\nmid q}\\{p|a_0a_1\cdots a_k}}}\left(1-\frac{1}{p}\right).
\end{align*}
Again recalling that the $a_i$'s are pairwise relatively prime on the primes $p>\max_i\{h_i\}$, we can actually separate the variables in our sum, so that
\begin{align*}
&\frac{1}{a_0a_1\cdots a_k}
\prod_{\substack{{\max_i\{h_i\}<p\leq r}\\{p\nmid q}\\{p\nmid a_0a_1\cdots a_k}}}\left(1-\frac{k+1}{p}\right)
\prod_{\substack{{\max_i\{h_i\}<p\leq r}\\{p\nmid q}\\{p|a_0a_1\cdots a_k}}}\left(1-\frac{1}{p}\right)\\
&\quad\quad\quad\quad\quad\quad=
\prod_{i=0}^k
\frac{1}{a_i}
\prod_{\substack{{\max_i\{h_i\}<p\leq r}\\{p\nmid q}\\{p\nmid a_i}}}\left(1-\frac{k+1}{p}\right)
\prod_{\substack{{\max_i\{h_i\}<p\leq r}\\{p\nmid q}\\{p|a_i}}}\left(1-\frac{1}{p}\right).
\end{align*}

To deal with the products over the primes, we can multiply and divide by "complements" of the given divisibility conditions, the idea being that we want to "complete" our products to keep only divisibility conditions (as opposed to saying $p\nmid q$, for example):
\begin{align*}
&\prod_{\substack{{\max_i\{h_i\}<p\leq r}\\{p\nmid q}\\{p\nmid a_i}}}\left(1-\frac{k+1}{p}\right)
\prod_{\substack{{\max_i\{h_i\}<p\leq r}\\{p\nmid q}\\{p|a_i}}}\left(1-\frac{1}{p}\right)\\
&=
\prod_{{\max_i\{h_i\}<p\leq r}}\left(1-\frac{k+1}{p}\right)
\prod_{\substack{{\max_i\{h_i\}<p\leq r}\\{p| q}}}\left(1-\frac{k+1}{p}\right)^{-1}
\prod_{\substack{{\max_i\{h_i\}<p\leq r}\\{p| a_i}}}\left(1-\frac{k+1}{p}\right)^{-1}
\prod_{\substack{{\max_i\{h_i\}<p\leq r}\\{p| q}\\{p| a_i}}}\left(1-\frac{k+1}{p}\right)\\
&\quad\quad\quad\quad\quad\quad\times
\prod_{\substack{{\max_i\{h_i\}<p\leq r}\\{p|a_i}}}\left(1-\frac{1}{p}\right)
\prod_{\substack{{\max_i\{h_i\}<p\leq r}\\{p| q}\\{p|a_i}}}\left(1-\frac{1}{p}\right)^{-1}\\
&\leq
\prod_{{\max_i\{h_i\}<p\leq r}}\left(1-\frac{k+1}{p}\right)
\prod_{\substack{{\max_i\{h_i\}<p\leq r}\\{p| q}}}\left(1-\frac{k+1}{p}\right)^{-1}
\prod_{\substack{{\max_i\{h_i\}<p\leq r}\\{p| a_i}}}\left(\frac{p-1}{p-(k+1)}\right),
\end{align*}
where we have combined the factors for $p|a_i$ and where we have trivially bounded the product of the factors over $p|q,a_i$ by $\leq 1$.

Putting all of this together, we have that
\begin{align*}
&\sum_{\substack{{a_0,a_1,\dots,a_k\leq r^{A_x}}\\{p|a_i\Rightarrow\leq r}\\{(a_i,a_j)|(h_i-h_j)}}}
\frac{1}{a_0a_1\cdots a_k}
\prod_{\substack{{\max_i\{h_i\}<p\leq r}\\{p\nmid q}\\{p\nmid a_0a_1\cdots a_k}}}\left(1-\frac{k+1}{p}\right)
\prod_{\substack{{\max_i\{h_i\}<p\leq r}\\{p\nmid q}\\{p|a_0a_1\cdots a_k}}}\left(1-\frac{1}{p}\right)\\
&\ll
\prod_{{\max_i\{h_i\}<p\leq r}}\left(1-\frac{k+1}{p}\right)
\prod_{\substack{{\max_i\{h_i\}<p\leq r}\\{p| q}}}\left(1-\frac{k+1}{p}\right)^{-1}
\left(
\sum_{\substack{{a\leq r^{A_x}}\\{p|a\Rightarrow p\leq r}}}
\frac{1}{a}\prod_{\substack{{\max_i\{h_i\}<p\leq r}\\{p| a}}}\left(\frac{p-1}{p-(k+1)}\right)\right)^{k+1}.
\end{align*}
\end{subsection}

Finally, we can bound the sum over $a$ by an Euler product, as the corresponding summand is a multiplicative function supported over the $r$-smooth integers:
\begin{align*}
\sum_{\substack{{a\leq r^{A_x}}\\{p|a\Rightarrow p\leq r}}}
\frac{1}{a}\prod_{\substack{{\max_i\{h_i\}<p\leq r}\\{p| a}}}\left(\frac{p-1}{p-(k+1)}\right)
&\leq
\prod_{p\leq k+1}\left(1-\frac{1}{p}\right)^{-1}
\prod_{\max_i\{h_i\}<p\leq r}\left(1+\frac{p-1}{p-(k+1)}\left(\frac{1}{p}+\frac{1}{p^2}+\dots \right)\right)\\
&\ll_k
\prod_{\max_i\{h_i\}<p\leq r}\left(1+\frac{1}{p-(k+1)}\right).
\end{align*}
Therefore, the total contribution from the "main term" in the FLST is bounded above by
\begin{align*}
    &\ll_k
    \frac{x}{q^{1/2-\epsilon}}
    \prod_{{\max_i\{h_i\}<p\leq r}}\left(1-\frac{k+1}{p}\right)
\prod_{\substack{{\max_i\{h_i\}<p\leq r}\\{p| q}}}\left(1-\frac{k+1}{p}\right)^{-1}
\prod_{\max_i\{h_i\}<p\leq r}\left(1+\frac{1}{p-(k+1)}\right)^{k+1}\\
&\ll_k
    \frac{x}{q^{1/2-\epsilon}}
    \prod_{\substack{{\max_i\{h_i\}<p\leq r}\\{p| q}}}\left(1-\frac{k+1}{p}\right)^{-1},
\end{align*}
where the last line follows from the fact that
\begin{align*}
    \prod_{{\max_i\{h_i\}<p\leq r}}\left(1-\frac{k+1}{p}\right)\left(1+\frac{1}{p-(k+1)}\right)^{k+1}
    &=
    \prod_{\max_i\{h_i\}<p\leq r} \left(1+\mathcal{O}\left(\frac{1}{p^2}\right)\right)=\mathcal{O}(1),
\end{align*}
which in turn follows from the Binomial Theorem. Hence, the main term is bounded above by
\begin{align*}
    \frac{x}{q^{1/2-\epsilon}}    \prod_{\substack{{\max_i\{h_i\}<p\leq r}\\{p| q}}}\left(1-\frac{k+1}{p}\right)^{-1}
    &\ll_k
    \frac{x\log^{k+1}q}{q^{1/2-\epsilon}},
\end{align*}
where the first line follows by taking exp-log of the product and using the Taylor series expansion of $\log$ and which produces an admissible error, after recalling that $x\leq q^{(\log\log\eta)/3}$.

\begin{subsection}{Dealing with the error from $u^{-u/2}$.} 
\label{Error-3}
For the error containing the term $u^{-u/2}$, we have that Equation (\ref{main_term}) is bounded above by
\begin{align*}
    &\ll_k
    u^{-u/2}\frac{x}{q}
\sum_{\substack{{a_0,a_1,\dots,a_k\leq r^{A_x}}\\{p|a_i\Rightarrow p\leq r}\\{(a_i,a_j)|(h_i-h_j)}}}
\frac{1}{\lcm(a_0,a_1,\dots,a_k)}
\sum_{n=0}^{q-1}
|\chi(b_0^*+na_0^*)\chi(b_1^*+na_1^*)\cdots\chi(b_k^*+na_k^*)|
\prod_{\substack{{p\leq r}\\{p\nmid q}}}\left(1-\frac{\nu(p)}{p}\right)\\
&=
u^{-u/2}\frac{x}{q}
\sum_{\substack{{a_0,a_1,\dots,a_k\leq r^{A_x}}\\{p|a_i\Rightarrow p\leq r}\\{(a_i,a_j)|(h_i-h_j)}}}
\frac{1}{\lcm(a_0,a_1,\dots,a_k)}
\prod_{\substack{{p\leq r}\\{p\nmid q}}}\left(1-\frac{\nu(p)}{p}\right)
\sum_{\substack{{n=0}\\{\left(\prod_{i=0}^{k}(b_i^*+na_i^*\right),q)=1}}}^{q-1}1.\\
\end{align*}

To estimate the sum over $n$, we use the fact that 
\[
\sum_{d|n}\mu(n)
=
        \left\{\def\arraystretch{1.2}%
        \begin{array}{@{}l@{\quad}l@{}}
            \text{$1$ if $n=1$}\\
            \text{$0$ otherwise},
        \end{array}\right.
\]
which yields:
\begin{align*}
    \sum_{\substack{{n=0}\\{\left(\prod_{i=0}^{k}(b_i^*+na_i^*\right),q)=1}}}^{q-1}1
    &=
    \sum_{\substack{{n=0}}}^{q-1}\sum_{d|\left(\prod_i (b_i^*+na_i^*),q\right)}\mu(d)\\
    &=
    \sum_{d|q}\sum_{\substack{{n=0}\\{d|\prod_i(b_i^*+na_i^*)}}}^{q-1}\mu(d)\\
    &=
    q\sum_{d|q}\frac{N(d)}{d},
\end{align*}
where $N(d)$ is the number of $n\in\Z/d\Z$ such that $\prod_{i=0}^{k+1}(b_i^*+na_i^*)\equiv 0\Mod{d},$ $d|q$. By the CRT, $N(d)$ is multiplicative, so that the sum can be written as 
\begin{align*}
        q\sum_{d|q}\frac{\mu(d)N(d)}{d}
        &=
        q\prod_{p|q}\left(1-\frac{N(p)}{p}\right)
\end{align*}
and it remains to compute $N(p)$; we consider various cases, depending on whether or not $a_i^*$ is invertible $\Mod{p}$, noting that $N(p)$ is equal to the number of $n \Mod{p}$ such that $na_i^*\equiv -b_i^*\Mod{p}$, for any $i$.
\begin{enumerate}
    \item $p\nmid a_0a_1\cdots a_k$: In this case, all the $a_i^*$ are invertible modulo $p$ so that exactly one $n$ satisfies the given congruence in the $i$-th equation. Assuming further that $p>\max_i\{h_i\}$, we get $k+1$ distinct solutions as $(a_i^*)^{-1}b_i^*=(a_j^*)^{-1}b_j^*\Mod{p}$ iff $p|(h_i-h_j)$; that is, $N(p)=k+1$ if $p>\max_i\{h_i\}$, with $1\leq N(p)\leq p$ otherwise.
    \item $p|a_0a_1\cdots a_k:$ Similarly, $N(p)=1$ if $p>\max_i\{h_i\}$ with $N(p)\leq p$ otherwise. The idea here is that the $a_i$'s are pairwise relatively prime on the primes $p>\max_i\{h_i\}$ so that exactly one $a_i$ is divisible by $p$ if $p|a_0\cdots a_k$, say $a_{i_0}$; in particular, we get exactly one solution at level $i_0$ and the other congruences are solvable iff $b_i^*\equiv 0\Mod{p}$ for some $i$. In the latter case, we have that $N(p)=p$ and the product is $0$, so we may assume that $p\nmid b_i^*$ for all $i$.
\end{enumerate}

Thus, the second error term is bounded above by
\begin{align*}
&\ll_k
u^{-u/2}x
\sum_{\substack{{a_0,a_1,\dots,a_k\leq r^{A_x}}\\{p|a_i\Rightarrow p\leq r}\\{(a_i,a_j)|(h_i-h_j)}}}
\frac{1}{\lcm(a_0,a_1,\dots,a_k)}\\
&\quad\quad\quad\times
\prod_{\substack{{\max_i\{h_i\}<p\leq r}\\{p\nmid q}\\{p\nmid a_0a_1\cdots a_k}}}\left(1-\frac{k+1}{p}\right)
\prod_{\substack{{\max_i\{h_i\}<p\leq r}\\{p\nmid q}\\{p|a_0a_1\cdots a_k}}}\left(1-\frac{1}{p}\right)
\prod_{\substack{{\max_i\{h_i\}<p}\\{p|q}\\{p\nmid a_0a_1\cdots a_k}}}\left(1-\frac{k+1}{p}\right)
\prod_{\substack{{\max_i\{h_i\}<p}\\{p|q}\\{p|a_0a_1\cdots a_k}}}\left(1-\frac{1}{p}\right)\\
&\ll_k
u^{-u/2}x
\sum_{\substack{{a_0,a_1,\dots,a_k\leq r^{A_x}}\\{p|a_i\Rightarrow p\leq r}\\{(a_i,a_j)|(h_i-h_j)}}}
\frac{1}{a_0a_1\cdots a_k}
\prod_{\substack{{\max_i\{h_i\}<p\leq r}\\{p\nmid a_0a_1\cdots a_k}}}\left(1-\frac{k+1}{p}\right)
\prod_{\substack{{\max_i\{h_i\}<p\leq r}\\{p|a_0a_1\cdots a_k}}}\left(1-\frac{1}{p}\right)\\
&\ll_k
u^{-u/2}x
\prod_{\substack{{\max_i\{h_i\}<p\leq r}}}\left(1-\frac{k+1}{p}\right)
\sum_{\substack{{a_0,a_1,\dots,a_k\leq r^{A_x}}\\{p|a_i\Rightarrow p\leq r}\\{(a_i,a_j)|(h_i-h_j)}}}
\frac{1}{a_0a_1\cdots a_k}
\prod_{\substack{{\max_i\{h_i\}<p\leq r}\\{p| a_0a_1\cdots a_k}}}\left(1-\frac{k+1}{p}\right)^{-1}
\prod_{\substack{{\max_i\{h_i\}<p\leq r}\\{p|a_0a_1\cdots a_k}}}\left(1-\frac{1}{p}\right).
\end{align*}

Again using the fact that the $a_i$'s are pairwise relatively on the primes $p>\max_i\{h_i\}$, we can separate the variables in the sum over the $a_i$'s:
\begin{align*}
&\sum_{\substack{{a_0,a_1,\dots,a_k\leq r^{A_x}}\\{p|a_i\Rightarrow p\leq r}\\{(a_i,a_j)|(h_i-h_j)}}}
\frac{1}{a_0a_1\cdots a_k}
\prod_{\substack{{\max_i\{h_i\}<p\leq r}\\{p| a_0a_1\cdots a_k}}}\left(1-\frac{k+1}{p}\right)^{-1}
\prod_{\substack{{\max_i\{h_i\}<p\leq r}\\{p|a_0a_1\cdots a_k}}}\left(1-\frac{1}{p}\right)\\
&=
\sum_{\substack{{a_0,a_1,\dots,a_k\leq r^{A_x}}\\{p|a_i\Rightarrow p\leq r}\\{(a_i,a_j)|(h_i-h_j)}}}
\left(\prod_{i=0}^k
\frac{1}{a_i}
\prod_{\substack{{\max_i\{h_i\}<p\leq r}\\{p| a_i}}}\left(1-\frac{k+1}{p}\right)^{-1}\left(1-\frac{1}{p}\right)
\right)\\
&\leq
\left(\sum_{\substack{{a\leq r^{A_x}}\\{p|a\Rightarrow p\leq r}}}
\frac{1}{a}
\prod_{\substack{{\max_i\{h_i\}<p\leq r}\\{p| a}}}\left(\frac{p-1}{p-(k+1)}\right)\right)^{k+1}\\
&\leq
\left(\prod_{p\leq \max_i\{h_i\}}\left(1-\frac{1}{p}\right)^{-1}\prod_{\max_i\{h_i\}<p\leq r}\left(1+\frac{p-1}{p-(k+1)}\left(\frac{1}{p}+\frac{1}{p^2}+\dots\right)\right)\right)^{k+1}.
\end{align*}

Therefore, the total contribution from the secondary error term is
\begin{align*}
    &\ll_k
    u^{-u/2}x
    \prod_{\max_i\{h_i\}<p\leq r}\left(1-\frac{k+1}{p}\right)\left(1+\frac{1}{p-(k+1)}\right)^{k+1}\\
    &\ll_k u^{-u/2}x,
\end{align*}
noting that this is the same product over the primes that we encountered earlier and which we saw was $\ll_k 1$. Taking $u\asymp (\log\log\eta)^{1/2}(\log\eta)^{1/12}$ produces an admissible error and thus establishes Theorem \ref{main_thm}.
\end{subsection}

\subsection{Fitting the pieces.} In this section, we want to say a few words which justify our choice of parameters for $r, A_x,$ and $u$. Setting $r=x^{1/\alpha}$, we need to choose the largest possible $A_x$ and $u$ for which the following errors are minimized, while optimizing $\alpha$:
\begin{align*}
\left\{\def\arraystretch{1.2}%
    \begin{array}{@{}l@{\quad}l@{}}
        \frac{x}{\log r}\left(\exp\left(\frac{\log x}{\log q}\right)(\log q)(\log \eta)^{-1/2}) + \log\log q\right) + \frac{x}{r} \quad\mbox{(error from the proof of Lemma \ref{lambda_r})},\\
        \frac{x}{A_x} \quad\mbox{(error from Lemma \ref{Pre-Sieve})},\\
        qr^{(k+1)A_x+u}(u\log r)^{k+1} \quad\mbox{(error from Section \ref{Error-1})},\\
        \frac{x\log^{k+1}q}{q^{1/2-\epsilon}} \quad\mbox{(error from Section \ref{Error-2})},\\
        xu^{-u/2} \quad\mbox{(error from Section \ref{Error-3})}.
    \end{array}\right.    
\end{align*}
For the error from Section \ref{Error-1}, we can get power savings by taking $(k+1)A_x = u = C\alpha$, for $C>0$ sufficiently small. The error from Lemmas \ref{lambda_r} and \ref{Pre-Sieve}, after normalizing by $x$, are of the form $\alpha/ ((\log\log\eta)(\log\eta)^{1/6})$ and $1/\alpha$, respectively, which yields the optimal choice of $\alpha:= (\log\log\eta)^{1/2}(\log\eta)^{1/12},$ thus establishing Theorem \ref{main_thm}.

\section{Acknowledgements}
The author would like to thank Maksym Radziwi{\l}{\l} for suggesting this problem and for his continuous support throughout the research phase of this paper. The author would also like to thank Dimitris Koukoulopoulos, Mariusz Lemańczyk, Patrick Meisner, James Rickards, and Peter Zenz for many helpful discussions and comments. Finally, some of this work was conducted while the author was visiting CalTech; he is grateful for their hospitality.

\bibliographystyle{alpha}
\bibliography{main}

\textsc{Department of Mathematics and Statistics, McGill University, 805 Sherbrooke St. W., Montreal, QC H3A 2K6, Canada}

\textit{Email address:} \href{mailto:iakov.chinis@gmail.com}{{\texttt{iakov.chinis@gmail.com}}}

\appendix
\section{The Fundamental Lemma of Sieve Theory}
\label{Sieve_Theory}

In this section, we present the Axioms of Sieve Theory, culminating in the Fundamental Lemma of Sieve Theory (FLST). We use the ideas presented here in order to move from a character sum over $r$-sifted integers to a character sum over one complete residue class (see Section \ref{Proof}). We follow Chapters 18 and 19 of \cite{Dimitris} and start off by listing some notation and the appropriate hypotheses needed for the FLST.

Let $\mathcal{A}$ denote a finite set of integers and let $\mathcal{P}$ denote a finite set of primes. We are interested in counting the number of elements of $\mathcal{A}$ which are relatively prime to $\mathcal{P}$; that is, we are interested in bounding the quantity
\[
S(\mathcal{A},\mathcal{P}):=\#\{a\in \mathcal{A}:(a,\mathcal{P})=1\},
\]
where $(a,\mathcal{P})=1$ means that $a$ has no prime factors in $\mathcal{P}$. In order to do so, it suffices to look at elements of $a\in\mathcal{A}$ which are divisible by $d|\prod_{p\in\mathcal{P}}p$; see Examples 18.1-18.6 in \cite{Dimitris}. More precisely, we are interested in having an asymptotic estimate for 
\[
\mathcal{A}_d:=\#\{a\in\mathcal{A}:a\equiv 0\Mod{d}\},
\]
so we assume the following:

\begin{axiom}
\label{Axiom_1}
There exists a multiplicative function $\nu$, a parameter $X$, and a sequence of remainders $(r_d)_{d|\mathcal{P}}$ such that 
\[
\text{$\mathcal{A}_d=\frac{\nu(d)}{d}X+r_d$ \;\;\;for all $d|\mathcal{P}$}
\]
and
\[
\text{$\nu(p)<p$ \;\;\;for all $p\in\mathcal{P}$.}
\]
\end{axiom}

\begin{rem}
Note that $d|\mathcal{P}$ is shorthand for $d|\prod_{p\in\mathcal{P}}p$.
\end{rem}

We should think of $\nu(p)$ as the number of residue classes modulo $p$ we are "removing" in order to capture elements of our set $\mathcal{A}$ which are prime to $\mathcal{P}$. As such, we want some sort of control over $\nu(p)$:

\begin{axiom}
\label{Axiom_2}
There are constants $\kappa,k\geq 0$ and $\epsilon\in(0,1]$ such that
\[
\text{$\sum_{p\in\mathcal{P}\cap[1,\omega]}\frac{\nu(p)\log p}{p} = \kappa\log \omega + \mathcal{O}(1)$ \;\;\;for all $\omega\leq \max\mathcal{P}$}
\]
and 
\[
\text{$\nu(p)\leq \min\{k,(1-\epsilon)p\}$ \;\;\;for all $p\in \mathcal{P}$.}
\]
\end{axiom}

Assuming that Axioms \ref{Axiom_1} and \ref{Axiom_2} hold, we are able to compute $S(\mathcal{A},\mathcal{P})$:

\begin{lem}[The Fundamental Lemma of Sieve Theory]
\label{FLST}
Suppose $\mathcal{A}$ and $\mathcal{P}$ satisfy Axioms \ref{Axiom_1} and \ref{Axiom_2} for some constants $\kappa,k\geq 0$, $\epsilon\in(0,1]$, and let $y=\max\mathcal{P}$. Then:
\[
S(\mathcal{A},\mathcal{P})
=
(1+\mathcal{O}_{\kappa,k,\epsilon}(u^{-u/2}))
X
\prod_{p\in\mathcal{P}}\left(1-\frac{\nu(p)}{p}\right)
+
\mathcal{O}\left(\sum_{d\leq y^u,d|\mathcal{P}}|r_d|\right),
\]
uniformly for $u\geq 1$.
\end{lem}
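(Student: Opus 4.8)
The plan is to prove Lemma \ref{FLST} by the classical method of combinatorial (upper/lower bound) sieve weights, following Chapters 18 and 19 of \cite{Dimitris}. Write $P:=\prod_{p\in\mathcal{P}}p$, so that by M\"obius inversion
\[
S(\mathcal{A},\mathcal{P})=\sum_{a\in\mathcal{A}}\sum_{d\mid(a,P)}\mu(d)=\sum_{d\mid P}\mu(d)\,\mathcal{A}_d .
\]
Inserting Axiom \ref{Axiom_1} here directly would leave the remainder $\sum_{d\mid P}|r_d|$, which is far too large; the point is to replace $\mu$ by a \emph{truncated} approximation supported on divisors $d$ of size at most $z:=y^{u}$, at the cost of a controlled multiplicative loss $1+O(u^{-u/2})$ in the main term.

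First I would bring in the sieve weights. There exist real numbers $\lambda_d^{\pm}$, supported on squarefree $d\mid P$ with $d\le z$ and with $|\lambda_d^{\pm}|\le 1$, such that for every squarefree $n\mid P$ one has $\sum_{d\mid n}\lambda_d^{-}\le \mathbbm{1}_{n=1}\le\sum_{d\mid n}\lambda_d^{+}$; these are the combinatorial beta-sieve weights of \cite{Dimitris}, whose defining feature is that the truncation is governed by the \emph{size} of $d$ (equivalently, by the level $z$) rather than merely by the number of its prime factors. Applying these inequalities with $n=(a,P)$ and summing over $a\in\mathcal{A}$ yields the sandwich
\[
\sum_{d\mid P}\lambda_d^{-}\mathcal{A}_d\ \le\ S(\mathcal{A},\mathcal{P})\ \le\ \sum_{d\mid P}\lambda_d^{+}\mathcal{A}_d .
\]
Next I would insert Axiom \ref{Axiom_1} into each side: writing $\mathcal{A}_d=\tfrac{\nu(d)}{d}X+r_d$, each bound splits as $X\sum_{d}\lambda_d^{\pm}\tfrac{\nu(d)}{d}+\sum_{d}\lambda_d^{\pm}r_d$, and since $|\lambda_d^{\pm}|\le 1$ with support on $d\le y^u$ dividing $P$, the second term is $O\big(\sum_{d\le y^u,\,d\mid P}|r_d|\big)$, precisely the remainder claimed.

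The heart of the argument is then to show
\[
\sum_{d\mid P}\lambda_d^{\pm}\frac{\nu(d)}{d}=\Big(1+O_{\kappa,k,\epsilon}(u^{-u/2})\Big)\prod_{p\in\mathcal{P}}\Big(1-\frac{\nu(p)}{p}\Big).
\]
Since $\sum_{d\mid P}\mu(d)\tfrac{\nu(d)}{d}=\prod_{p\in\mathcal{P}}(1-\nu(p)/p)=:V(\mathcal{P})$, this is the same as bounding the truncation error $\sum_{d}(\lambda_d^{\pm}-\mu(d))\tfrac{\nu(d)}{d}$ by $O_{\kappa,k,\epsilon}(u^{-u/2})\,V(\mathcal{P})$. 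Both hypotheses in Axiom \ref{Axiom_2} enter here: the bound $\nu(p)\le\min\{k,(1-\epsilon)p\}$ keeps the local factors and $V(\mathcal{P})$ from degenerating, while the dimension estimate $\sum_{p\in\mathcal{P}\cap[1,\omega]}\tfrac{\nu(p)\log p}{p}=\kappa\log\omega+O(1)$ controls the density of the sieve and feeds into the combinatorial estimate $\sum_{d}(\lambda_d^{\pm}-\mu(d))\tfrac{\nu(d)}{d}\ll_{\kappa,k,\epsilon} V(\mathcal{P})\,e^{-u\log u+O_{\kappa}(u)}\ll V(\mathcal{P})\,u^{-u/2}$, valid uniformly for $u\ge 1$ (for bounded $u$ all estimates are trivially $O(1)$).

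Combining, both the upper and the lower sandwich equal $X\,V(\mathcal{P})\,(1+O(u^{-u/2}))+O\big(\sum_{d\le y^u,\,d\mid P}|r_d|\big)$, and hence so does $S(\mathcal{A},\mathcal{P})$, which is the assertion. The only genuinely nontrivial step is the construction and tail analysis of the weights $\lambda_d^{\pm}$ — i.e.\ extracting the superpolynomial saving $u^{-u/2}$ from the combinatorial structure of the sieve together with the dimension bound of Axiom \ref{Axiom_2}; everything else is M\"obius inversion and bookkeeping with $|\lambda_d^{\pm}|\le 1$. An essentially equivalent route, the one taken in \cite{Dimitris}, is to iterate Buchstab's identity roughly $u$ times, peeling off the largest prime factor at each step and tracking the accumulated error against $V(\mathcal{P})$; this produces the same $u^{-u/2}$ saving without writing the $\lambda_d^{\pm}$ explicitly, and I would be content to present it in that form instead.
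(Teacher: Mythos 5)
Your proposal is correct and follows essentially the same route as the paper, whose "proof" of Lemma \ref{FLST} is simply a citation to \cite[Chapters 18--19]{Dimitris}: there the Fundamental Lemma is established exactly via the combinatorial beta-sieve weights $\lambda_d^{\pm}$ (equivalently, an iterated Buchstab decomposition) that you describe, with the two halves of Axiom \ref{Axiom_2} entering just where you say they do. The one point worth flagging is that your chain $e^{-u\log u+O_\kappa(u)}\ll u^{-u/2}$ only holds once $u$ exceeds a constant depending on $\kappa$; you do note that small $u$ is trivial, but a fully written proof should make that split explicit so the claimed uniformity "for $u\ge 1$" is literally justified.
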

\begin{proof}
See \cite[Chapters 18-19]{Dimitris}.
\end{proof}

\begin{rem}
We call $D=y^u$ the \textbf{level of distribution} of the sieve. As Koukoulopoulos remarks in his book, the level of distribution is "a measure of how well we can control the distribution of $\mathcal{A}$ among the progressions $0\Mod{d}$," with $d|\mathcal{P}$. In order to control the level of distribution, we often use a "preliminary sieve," which removes integers with smaller prime factors and then use another sieve to remove larger primes. This is exactly what we do in the proof of Theorem \ref{main_thm} by sieving out the integers whose $r$-smooth part is "large."
\end{rem}

\section{Character Sums}
\label{Character_Sums}

The key to the work of Germ{\'a}n and K{\'a}tai is to approximate the Liouville function by a real, primitive Dirichlet character $\chi \Mod{q}$ on "large" primes, so that, with the help of some sieve theory, we can change our problem of bounding $k$-point correlations of $\lambda$ to one of bounding character sums with a polynomial argument, which are well understood. For our purposes, we need to bound character sums of the form
\[
\sum_{n\Mod{q}}\chi(f(n)),
\]
where $f$ is some polynomial with integer coefficients which can be factored into distinct linear factors, with $q$ equal to the conductor/modulus of the real, primitive character $\chi$. There are various instances of these types of bounds when the conductor $q$ is a prime, dating back to the work of Weil on the Riemann Hypothesis over finite function fields; see, \cite{Burgess}, for example, and also \cite{Schmidt} for an elementary approach to understanding curves over finite fields. In \cite{Liou_Siegel}, they look at $f(x)=x(x+1),$ it which case it is known that the above character sum is exactly equal to $-1$. For general $f$, we have the following, due to Weil:

\begin{lem}[Weil]
\label{CharacterSum}
Let $\chi$ be a Dirichlet character modulo $p$ of order $d|(p-1)$. If $f\in\Z[x]$ is not a $d$-th power modulo $p$ (i.e., $f(x)\not\equiv cg(x)^d \Mod{p}$ identically for any $c\in \Z$ and any $g\in\Z[x]$) and if $f$ has $m$ distinct roots modulo $p$, then:
\[
\left|\sum_{n\Mod{p}}\chi(f(n))\right|\leq (m-1)p^{1/2},
\]
where the sum runs over an entire residue class modulo the prime $p$.
\end{lem}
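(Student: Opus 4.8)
The plan is to deduce the bound from the Riemann Hypothesis for curves over finite fields (Weil's theorem), which I would cite from \cite{Schmidt} rather than reprove, by interpreting the character sum as a point count on the Kummer curve $C_f\colon y^d=f(x)$ over $\mathbb{F}_p$. Since the only case used in the body of the paper is $d=2$, I will describe that case in detail; the general $d$ goes the same way once one works with the $L$-function of $\chi$ itself rather than with the full zeta function of $C_f$. I may also assume $f$ is squarefree modulo $p$ of degree $m$, which is the situation in the application (there $f=(b_0^*+a_0^*x)\cdots(b_k^*+a_k^*x)$ is a product of $m=k+1$ pairwise distinct linear forms); the general case follows by factoring out the $d$-th-power part of $f$ and absorbing the resulting error using $p^{1/2}\ge1$.

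Take $d=2$, so $\chi$ is the quadratic character modulo $p$. Counting, for each $n$, the number of $y\in\mathbb{F}_p$ with $y^2=f(n)$ (namely $1+\chi(f(n))$, with the convention $\chi(0)=0$), one gets $\#C_f^{\mathrm{aff}}(\mathbb{F}_p)=p+\sum_{n\bmod p}\chi(f(n))$. The hypothesis that $f$ is not a square modulo $p$ means some root of $f$ has odd multiplicity — here, since $f$ is squarefree of degree $m$, this is automatic once $m\ge1$ — so $C_f$ is geometrically irreducible; let $\widetilde{C}_f$ be its smooth projective model, of genus $g$. Since $f$ is squarefree the affine plane model is already smooth, so $\widetilde{C}_f$ is obtained just by adjoining the point(s) above $x=\infty$, of which there is exactly one when $m$ is odd and either zero or two when $m$ is even; thus $\#\widetilde{C}_f(\mathbb{F}_p)=\#C_f^{\mathrm{aff}}(\mathbb{F}_p)+\delta$ with $\delta=1$ ($m$ odd) or $\delta\in\{0,2\}$ ($m$ even).

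Weil's theorem gives $\#\widetilde{C}_f(\mathbb{F}_p)=p+1-\sum_i\alpha_i$ with $|\alpha_i|=p^{1/2}$ for the $2g$ values $\alpha_i$, so $\sum_{n\bmod p}\chi(f(n))=(1-\delta)-\sum_i\alpha_i$ and hence $\bigl|\sum_{n\bmod p}\chi(f(n))\bigr|\le 2g\,p^{1/2}+|1-\delta|$. Now the genus of the hyperelliptic curve $y^2=f(x)$ with $f$ squarefree of degree $m$ is $g=\lfloor(m-1)/2\rfloor$: when $m$ is odd, $2g=m-1$ and $\delta=1$ so the correction term $|1-\delta|$ vanishes; when $m$ is even, $2g=m-2$ and $|1-\delta|\le1$, which is absorbed since $p^{1/2}\ge1$. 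In both cases $\bigl|\sum_{n\bmod p}\chi(f(n))\bigr|\le(m-1)p^{1/2}$, as claimed.

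The single deep ingredient is Weil's theorem, i.e.\ the purity $|\alpha_i|=p^{1/2}$; everything else is elementary. The step that genuinely needs care, and the only place I expect friction, is matching the constant exactly to $m-1$ rather than to $2g$ or $m$: this forces one to track whether $x=\infty$ ramifies (equivalently the parity of $\deg f$) and to notice that the drop in the genus when $\infty$ is unramified exactly compensates the extra rational points above $\infty$ — and, for general $d$, to replace the genus bound by the statement that the conductor of the character $\chi\circ f$ has degree at most $m+1$, so that its $L$-function has degree at most $m-1$. In the application this bookkeeping is completely explicit, the non-squareness of $f$ modulo $p$ for $p>\max_i\{h_i\}$ being exactly the input $p\nmid h_i-h_j$ for $i\ne j$.
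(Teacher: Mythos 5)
The paper does not actually prove Lemma~\ref{CharacterSum} --- it is quoted verbatim with a pointer to \cite[Theorem 2C']{Schmidt} (and to \cite{IwaKow}, \cite{MV}); Schmidt's book establishes it by Stepanov's elementary method. Your sketch takes the other classical route, reading the sum as a point-count on the Kummer curve $y^d=f(x)$ and invoking the Riemann Hypothesis for that curve, and your bookkeeping in the $d=2$ case is correct and complete: the affine count $\#C^{\mathrm{aff}}_f(\F_p)=p+\sum_n\chi(f(n))$, the irreducibility of the curve from $f$ not being a square, the smoothness of the affine model from squarefreeness, the genus $g=\lfloor(m-1)/2\rfloor$, and the compensation between $2g$ and the number $\delta$ of points above $\infty$ (so $2g+|1-\delta|\le m-1$ in all parities, using $p^{1/2}\ge 1$) all check out, and you correctly observe that $d=2$ with $f$ a product of $m$ distinct linear forms is the only case the paper ever uses. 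For general $d$ your final remark --- bound the degree of the conductor of $\chi\circ f$ by $m+1$ so that the Artin $L$-function has degree at most $m-1$, then apply purity --- is exactly the right framework and is how the sharp constant $m-1$ is obtained without genus computations; this is stated somewhat telegraphically, but appropriately so for a cited lemma. The only soft spot is the one-line reduction ``factor out the $d$-th-power part and absorb the error'': for $d=2$ this does work (the lost roots contribute at most $e$ to the sum and cost exactly $e$ in the count of distinct roots, with $e\cdot 1\le e\cdot p^{1/2}$), but for $d>2$ the $d$-th-power-free part need not be squarefree and the affine model is then singular, so one must either desingularize or go through the $L$-function/conductor argument directly; since that case is never used here, this is not a real gap. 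Net comparison: the paper gains brevity by citing Schmidt's elementary (Stepanov) proof; your geometric argument makes transparent where the constant $m-1$ comes from (it is the degree of the $L$-function, i.e.\ $2g$ plus the correction at $\infty$), at the cost of importing RH for curves.
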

\begin{proof}
See \cite[Theorem 2C' (pg. 43)]{Schmidt} (or even \cite[Theorem 11.23]{IwaKow}/\cite[Lemma 9.25]{MV}).
\end{proof}

Our goal is to apply Lemma \ref{CharacterSum} with $\chi$ a real, primitive Dirichlet character modulo $q$, with $q$ not necessarily a prime. Fortunately for us, all such characters have conductor $q=2^jm$, where $j\leq 3$ and where $m$ is an odd, squarefree integer; see \cite[Section 9.3]{MV}, for example. Furthermore, the Chinese Remainder Theorem allows us to write each $n\Mod{q}$ uniquely as
\[
n=a_1\frac{q}{p_1^{\alpha_1}}+\dots+a_s\frac{q}{p_s^{\alpha_s}},
\]
for any $q=p_1^{\alpha_1}\cdots p_s^{\alpha_s}$, with $a_i$ varying over a complete residue class modulo $p_i^{\alpha_i}$. In particular, for characters $\chi$ modulo $q$ such that $\chi=\chi_1\cdots\chi_s$ with $\chi_i$ a character modulo $p_i^{\alpha_i}$,
\begin{align*}
    \sum_{n\Mod{q}}\chi(f(n))
    &=
    \sum_{a_1,\dots,a_s}\prod_{i=1}^s\chi_i\left(f\left(a_1\frac{q}{p_1^{\alpha_1}}+\dots+a_s\frac{q}{p_s^{\alpha_s}}\right)\right)\\
    &=
    \prod_{i=1}^s \sum_{a_i\Mod{p_i^{\alpha_i}}}\chi_i\left(f\left(a_i\frac{q}{p_i^{\alpha_i}}\right)\right),
\end{align*}
where the last line follows from the fact that $\chi_i$ is periodic with period $p_i^{\alpha_i}$. Then, using the fact that every real, primitive character $\chi \Mod{q}$ can be written uniquely as $\chi = \chi_1\cdots \chi_s$ with each $\chi_i$ being real and primitive, Lemma \ref{CharacterSum} implies that
\begin{align*}
        \sum_{n\Mod{q}}\chi(f(n))
        &\ll
        (\deg(f) - 1)^sq^{1/2},
\end{align*}
provided $f$ is not a square modulo $p$ for all but finitely many primes $p$. Setting $N=\deg(f)-1$ and noting that $\omega(q)=s$, we then have that $N^{\omega(q)}\leq \tau_N(q) \ll q^\epsilon$, for any $\epsilon>0$, so that
\begin{align*}
    \sum_{n\Mod{q}}\chi(f(n))
    &\ll
    q^{1/2+\epsilon},
\end{align*}
for any real, primitive Dirichlet character modulo $q$, provided $f$ is not a square modulo $p$ for all but finitely-many $p$.

\section{Parametrization}
\label{Parametrization}

In this section, we briefly discuss how to use the Smith Normal Form of a matrix in order to solve a system of Diophantine equations. Our ultimate goal is to apply this technique in order to show that the solutions of the following system of integer equations
\begin{align*}
        \left\{\def\arraystretch{1.2}%
        \begin{array}{@{}l@{\quad}l@{}}
            a_1b_1 = a_0b_0 + h_1,\\
            \vdots\\
            a_kb_k = a_0b_0 + h_k,
        \end{array}\right.
\end{align*}
in the unknowns $b_0,b_1,\dots,b_k$, are given by
\[
b_i=b_i^* + m\frac{\lcm(a_0,a_1,\dots,a_k)}{a_i}=:b_i^*+ma_i^*,
\]
where $(b_0*,b_1^*,\dots,b_k^*)$ is one particular solution of the system and where $m$ ranges over all the integers.

\begin{rem}
In the case where $k=1$ and $(a_0,a_1)=1$, Bezout's Lemma tells us that the $b_i$'s can be parametrized as 
\[
b_i=b_i^*+m\frac{a_1a_2}{a_i},
\]
where $(b_0^*,b_1^*)$ is one particular solution of the system and where $m$ ranges over all integers. The proof of Bezout's Lemma follows from the Chinese Remainder Theorem; it is very likely that the proof generalizes, but we prefer to use a more direct method. Also, it is clear that such $b_i$'s generate a set of solutions and seems likely that one could show that all solutions must be of the form above. In any case, the SNF gives us a versatile tool to handle more general cases.
\end{rem}

Let $A$ be an $m\times n$ matrix with integer entries and consider the system $AX=C$, for a given integer matrix $C$. Then, there exist invertible matrices $U$ and $V$ with integer entries such that $B:=UAV$ is (almost) diagonal: in general, $B$ may not be a square matrix, but the non-diagonal entries will be zero. We call $B$ the \textit{Smith Normal Form} of $A$ and finding the matrices $U$ and $V$ amounts to using limited versions of the elementary row and column operations which preserve integer entries: since we are looking for invertible matrices $U$ and $V$ with integer entries, we must ensure that whatever operations we apply to the matrix $A$ will preserve our integer entries. What is important here is that solving $AX=C$ in the integers is equivalent to making the change of variable $Y=V^{-1}X$ and solving the system $BY=D$, where $D:=UC$. In particular, the original system will have integer solutions iff $b_{ii}y_i=d_i$ for all $i$ (where $D$ is a column matrix and $d_i$ is the entry in row $i$). This last system is then solvable over the integers iff $b_{ii}|d_i$ whenever $b_{ii}\neq 0$ and $d_i=0$ whenever $b_{ii}=0$, in which case,
\[
X=V
\begin{bmatrix}
\frac{b_{11}}{d_1}\\
\frac{b_{22}}{d_2}\\
\vdots\\
\frac{b_{kk}}{d_k}\\
f_{k+1}\\
\vdots\\
f_{n}
\end{bmatrix},
\]
where the $b_{ii}$'s are arranged so that $b_{ii}\neq 0$ for all $i=1,\dots,k$ and where $f_{k+1},\dots,f_n$ are arbitrary integers (representing the $n-k$ free variables). 

The above decomposition hinges on our ability to find invertible matrices $U$ and $V$ such that $UAV$ is diagonal. We illustrate how to do this for the following system of equations in the unknowns $b_0,b_1,\dots,b_k$,
\begin{align*}
        \left\{\def\arraystretch{1.2}%
        \begin{array}{@{}l@{\quad}l@{}}
            a_1b_1 = a_0b_0 + h_1,\\
            \vdots\\
            a_kb_k = a_0b_0 + h_k,
        \end{array}\right.
\end{align*}
proving each solution $(b_0,b_1,\dots,b_k)$ can be parametrized as 
\[
b_i=b_i^*+ma_i^*,
\]
where $(b_0^*,b_1^*,\dots,b_k^*)$ is one particular solution of the system (assuming that the system is solvable) and where 
\[
a_i^*:=\frac{\lcm(a_0,a_1,\dots,a_k)}{a_i},
\]
for $i=0,1,\dots,k$.

To show the above, we simply use the algorithm which produces the SNF of $A$. So, let
\[
A:=
\begin{bmatrix}
a_0 & -a_1 & 0 & \dots & 0 & 0\\
0   &  a_1 & -a_2 &\dots & 0 & 0\\
\vdots & & & & &\\
0 & 0 & 0 & \dots & a_{k-1} & -a_{k}  
\end{bmatrix},
\]
\[
X:=
\begin{bmatrix}
b_0\\
b_1\\
\vdots\\
b_{k},
\end{bmatrix}
\]
and 
\[
C:=
\begin{bmatrix}
-h_1\\
h_1-h_2\\
\vdots\\
h_{k-1}-h_{k}
\end{bmatrix}.
\]

Our goal it to solve the system $AX=C$ over the integers. To begin, let $d_{0,1}:=(a_0,a_1)$. Then, there exist integers $x_{0,1},y_{0,1}$ such that $d_{0,1}=a_0x+a_1y$; in particular, $1=x_{0,1}a_0/d_{0,1}+y_{0,1}a_1/d_{0,1}$. Embedding this information into a $(k+2)\times (k+2)$ identity matrix $V_1$, we can get the following:
\begin{align*}
AV_1
&=
\begin{bmatrix}
a_0 & -a_1 & 0 & \dots & 0 & 0\\
0   &  a_1 & -a_2 &\dots & 0 & 0\\
\vdots & & & & &\\
0 & 0 & 0 & \dots & a_{k-1} & -a_{k}  
\end{bmatrix}
\begin{bmatrix}
x_{0,1}  & a_1/d_{0,1} & 0 & \dots & 0\\
-y_{0,1} & a_0/d_{0,1} & 0 & \dots & 0\\
0 & 0 & 1 & \dots & 0\\
\vdots & & & &\\
0 & 0 & 0 & \dots & 1  
\end{bmatrix}\\
&=
\begin{bmatrix}
d_{0,1} & 0 & 0 & \dots & 0 & 0\\
-a_1y   &  a_0a_1/d_{0,1} & -a_2 &\dots & 0 & 0\\
\vdots & & & & &\\
0 & 0 & 0 & \dots & a_{k-1} & -a_{k}  
\end{bmatrix}.
\end{align*}

To get zeros below the leading variable, consider the $(k+1)\times (k+1)$ matrix $U_1$ defined by
\[
U_1
:=
\begin{bmatrix}
1  & 0 & 0 & \dots & 0\\
a_1y & d_{0,1} & 0 & \dots & 0\\
0 & 0 & 1 & \dots & 0\\
\vdots & & & &\\
0 & 0 & 0 & \dots & 1  
\end{bmatrix}
\]
and note that
\[
U_1AV_1
=
\begin{bmatrix}
d_{0,1} & 0 & 0 & \dots & 0 & 0\\
0   &  a_0a_1 & -d_{0,1}a_2 &\dots & 0 & 0\\
\vdots & & & & &\\
0 & 0 & 0 & \dots & a_{k-1} & -a_{k}  
\end{bmatrix}.
\]

We continue in the same manner: let $d_{01,2}=(a_0a_1,d_{0,1}a_2)=d_{0,1}(a_0a_1/d_{0,1},a_2)$, then there exist integers $x_{01,2},y_{01,2}$ such that $d_{01,2}=a_0a_1x_{01,2}+d_{0,1}a_2y_{01,2}$. Embedding this information in another $(k+2)\times (k+2)$ identity matrix $V_2$, we have that
\begin{align*}
U_1AV_1V_2
&=
\begin{bmatrix}
d_{0,1} & 0 & 0 & \dots & 0 & 0\\
0   &  a_0a_1 & -d_{0,1}a_2 &\dots & 0 & 0\\
0   &  0 & a_2 &\dots & 0 & 0\\
\vdots & & & & &\\
0 & 0 & 0 & \dots & a_{k-1} & -a_{k}  
\end{bmatrix}
\begin{bmatrix}
1 & 0 & 0 & \dots & 0\\
0 & x_{01,2}  & a_2d_{0,1}/d_{01,2} & \dots & 0\\
0 & -y_{01,2} & a_0a_1/d_{01,2}  & \dots & 0\\
\vdots & & & &\\
0 & 0 & 0 & \dots & 1  
\end{bmatrix}\\
&=
\begin{bmatrix}
d_{0,1} & 0 & 0 & 0 &\dots & 0 & 0\\
0   &  d_{01,2} & 0 & 0 &\dots & 0 & 0\\
0   &  -a_2y_{01,2} & a_0a_1a_2/d_{01,2} & -a_3 &\dots & 0 & 0\\
\vdots & & & & & &\\
0 & 0 & 0 & 0 & \dots & a_{k-1} & -a_{k}  
\end{bmatrix}
\end{align*}

To get zeros below the leading variable, consider the $(k+1)\times (k+1)$ matrix $U_2$ defined by
\[
U_2
:=
\begin{bmatrix}
1  & 0 & 0 & 0 & \dots & 0\\
0 & 1 & 0 & 0 & \dots & 0\\
0 & -a_2y_{01,2} & d_{01,2} & 0 & \dots & 0\\
\vdots & & & & &\\
0 & 0 & 0 & 0 & \dots & 1  
\end{bmatrix}
\]
and note that
\[
U_2U_1AV_1V_2
=
\begin{bmatrix}
d_{0,1} & 0 & 0 & 0 &\dots & 0 & 0\\
0   &  d_{01,2} & 0 & 0 &\dots & 0 & 0\\
0   &  0 & a_0a_1a_2 & -a_3d_{01,2} &\dots & 0 & 0\\
\vdots & & & & & &\\
0 & 0 & 0 & 0 & \dots & a_{k-1} & -a_{k}  
\end{bmatrix}
\]

Continuing by induction, we see that the Smith Normal Form of the matrix $A$ has diagonal entries $d_{01\cdots j,j+1}$, for $j=0,1,\dots,k-1$, which are defined recursively by
\[
d_{01\cdots j,j+1}:=\gcd(a_0a_1\cdots a_j,a_{j+1} d_{01\cdots j-1,j})
\]
with 
\[
d_{0,1}:=\gcd(a_0,a_1).
\]
What is important to note here is that the SNF of $A$ has the maximal rank; in particular, if the SNF satisfies some nice divisibility properties in relation to the matrix $D=UC$, we get infinitely-many solutions which are parametrized by exactly one free variable (because we have full rank, but the matrix $A$ has dimensions $k\times (k+1)$). Furthermore, the solutions will then be given by 
\[
X=V
\begin{bmatrix}
\frac{b_{11}}{d_1}\\
\frac{b_{22}}{d_2}\\
\vdots\\
\frac{b_{kk}}{d_k}\\
f_{k+1}\\
\end{bmatrix}
\]
and all that is left for us to do is to compute the last column of the matrix $V$. A quick calculation shows that the last column of $V$ has entry
\[
\frac{a_0a_1\cdots a_{i-1}a_{i+1}\cdots a_k}{d_{01\cdots k-1,k}}
\]
in row $i$, for $i=0,1,\dots,k$ and that this is equal to
\[
\frac{\lcm(a_0,a_1,\dots,a_k)}{a_i},
\]
as claimed.

\end{document}